\newcommand{\bq}{\begin{equation}}
\newcommand{\eq}{\end{equation}}
\newcommand{\R}{\mathbb{R}}
\newcommand{\abs}[1]{\left\vert#1\right\vert}
\newcommand{\norm}[1]{\left\vert#1\right\vert}
\newcommand{\G}{\mathcal{G}}
\newcommand{\bO}{\mathcal{O}}
\newcommand{\Dt}{\mathcal{D}}
\newcommand{\Af}{\mathcal{A}}
\newcommand{\Sf}{\mathcal{S}}
\newcommand{\Nf}{\mathcal{N}}
\newcommand{\MA}{Monge-Amp\`ere\xspace}
\newcommand{\Omb}[0]{\overline{\Omega}}
\newcommand{\x}[0]{\times}
\newcommand{\pOm}[0]{\partial \Omega}
\newcommand{\ra}[0]{\rightarrow}
\newcommand{\bracs}[1]{\left(#1\right)}
\newcommand{\utt}[0]{u_{\theta\theta}}
\newcommand{\maop}[0]{\det(D^2u(x))}
\algnewcommand{\LineComment}[1]{\State \(\triangleright\) #1}
\newtheorem{theorem}{Theorem}
\theoremstyle{lemma}
\newtheorem{lemma}[theorem]{Lemma}
\newtheorem{corollary}[theorem]{Corollary}
\newtheorem{definition}[theorem]{Definition}
\theoremstyle{remark}
\newcommand\appendix@section[1]{%
\refstepcounter{section}%
\orig@section*{Appendix \@Alph\c@section: #1}%
}
\let\orig@section\section
\g@addto@macro\appendix{\let\section\appendix@section}
\begin{document}

\title[A quadrature method for Monge-Amp\'ere]{A convergent quadrature based method for the Monge-Amp\`ere equation}

\author{Jake Brusca}
\address{Department of Mathematical Sciences, New Jersey Institute of Technology, University Heights, Newark, NJ 07102}
\email{jb327@njit.edu}
\author{Brittany Froese Hamfeldt}
\address{Department of Mathematical Sciences, New Jersey Institute of Technology, University Heights, Newark, NJ 07102}
\email{bdfroese@njit.edu}

\thanks{The authors were partially supported by NSF DMS-1751996.}

\begin{abstract}
We introduce an integral representation of the Monge-Amp\`ere equation, which leads to a new finite difference method based upon numerical quadrature. The resulting scheme is monotone and fits immediately into existing convergence proofs for the Monge-Amp\`ere equation with either Dirichlet or optimal transport boundary conditions.  The use of higher-order quadrature schemes allows for substantial reduction in the component of the error that depends on the angular resolution of the finite difference stencil.  This, in turn, allows for significant improvements in both stencil width and formal truncation error.  {The resulting schemes can achieve a formal accuracy that is arbitrarily close to $\bO(h^2)$, which is the optimal consistency order for monotone approximations of second order operators.} We present {three} different implementations of this method.  The first {two} exploit the spectral accuracy of the trapezoid rule on uniform angular discretizations to allow for computation on a nearest-neighbors finite difference stencil over a large range of grid refinements.  The {third} uses higher-order quadrature to produce superlinear convergence while simultaneously utilizing narrower stencils than other monotone methods. Computational results are presented in two dimensions for problems of various regularity.
\end{abstract}

\date{\today}    
\maketitle
\section{Introduction}\label{sec:intro}
In this article we introduce an integral representation of the \MA equation
\bq\label{eq:MA}
\begin{cases}
-\det(D^2u(x)) + f(x)=0, & x \in \Omega\\
u \text{ is convex}
\end{cases}
\eq
where $\Omega\subset\R^n$ is convex and the right-hand side $f$ is non-negative. 
  This allows us to produce new monotone approximation schemes via quadrature.  Because these schemes are monotone, they fit within several recently developed numerical convergence frameworks~\cite{BenamouDuval_MABVP2,Bonnet_OTBC,FO_MATheory,Hamfeldt_Gauss,Hamfeldt_OTBC,ObermanEigenvalues}.  Moreover, these new schemes offer significant advantages over existing monotone methods in terms of both accuracy and efficiency.
  
  Recent years have seen a growing interest in \MA type equations in the context of a diverse range of problems including design of optical systems~\cite{Romijn_OTDesignSummary}, geophysics~\cite{EF_FWI}, mesh generation~\cite{BuddMeshGen}, medical image processing~\cite{Haker}, meteorology~\cite{CullenOcean1991}, and data science~\cite{Peyre_data}.  This has encouraged the design of many new methods for the \MA equation including~\cite{BFO_MA,BrennerNeilanMA2D,DGnum2006,FengNeilan,Prins_BVP2}.
  
  The development of numerical methods that are guaranteed to converge to the correct solution, particularly in the absence of classical solutions, has proven to be more challenging.  An early method~\cite{olikerprussner88} used a geometric interpretation of weak solutions to design a convergent, but computationally expensive, method for the 2D \MA equation.  Recently, convergence frameworks have been established for the \MA equation with either Dirichlet boundary conditions~\cite{FO_MATheory,Hamfeldt_Gauss,Nochetto_MAConverge,ObermanEigenvalues}:
  \bq\label{eq:dirichlet}
  u(x) = g(x), \quad x \in \partial\Omega
  \eq
  or the second type boundary condition arising in optimal transport~\cite{BenamouDuval_MABVP2,Bonnet_OTBC,Hamfeldt_OTBC}:
  \bq\label{eq:otbc}
  \nabla u(\Omega) \subset \tilde{\Omega}.
  \eq
  
  These convergence proofs can be viewed as extensions of the powerful Barles and Souganidis convergence framework~\cite{BSnum}, which is valid for weak (viscosity) solutions of fully nonlinear partial differential equations.  Critically, they are only valid for approximation schemes that are monotone. Construction of monotone schemes for degenerate elliptic PDE operators is not trivial: in fact, given any fixed finite difference stencil, it is possible to find linear elliptic operators for which no consistent, monotone approximation is possible on the given stencil~\cite{Kocan,MotzkinWasow}. Circumventing this challenge requires the use of finite difference stencils that are allowed to grow wider as the grid is refined. {Monotone schemes are inherently limited in their accuracy: a monotone approximation of a second-order operator can achieve at most second-order ($\bO(h^2)$) truncation error~\cite[Theorem~4]{ObermanEP}.}  
	
	Several monotone finite difference schemes are now available for the \MA equation~\cite{benamou2014monotone,Bonnet_OTBC,FroeseMeshfreeEigs,FO_MATheory,mirebeau2015MA,ObermanEigenvalues}.
  Because of the wide-stencil nature of these methods, the methods are computationally expensive and typically have low (sub-linear) accuracy.  {There are limited techniques available that are capable of achieving the optimal $\bO(h^2)$ truncation error~\cite{benamou2014monotone,Bonnet_OTBC}, but these schemes are valid for the \MA equation only in two-dimensions and with problem data that guarantees uniform ellipticity of the PDE.}  These challenges are magnified in three dimensions, where even evaluating the finite difference approximations (without attempting to solve the resulting nonlinear system) can be prohibitively expensive~\cite{HL_ThreeDimensions}.

In this article, we propose to express the \MA operator in terms of a Gaussian integral.  This allows us to utilize higher-order quadrature schemes in order to simultaneously achieve improved consistency error {(of $\bO(h^{2-\epsilon})$ for any $\epsilon>0$)} and {more compact wide} finite difference stencils.  The schemes are nevertheless monotone, and fit neatly within the existing proofs of convergence of numerical methods to the weak (viscosity) solution of the \MA equation.  We describe {three} different implementations of this approach in two dimensions and validate the performance using a range of standard benchmark problems for the Dirichlet problem.  This new formulation of the \MA equation holds particular promise for the development of computationally practical methods in three dimensions, as it provides a dimension-reduction as compared with a typical variational formulation of the 3D \MA equation.  It also extends naturally to more general \MA type equations in optimal transport, including equations that are posed on the sphere~\cite{HT_OTonSphereNumerics}.

\section{Background}\label{sec:background}
\subsection{Elliptic equations}

The \MA equation is an example of a degenerate elliptic partial differential equation, which take{s} the general form
\bq\label{eq:elliptic}
F(x,u(x),\nabla u(x), D^2u(x)) = 0, \quad x \in \bar{\Omega}.
\eq

\begin{definition}[Degenerate Elliptic]
Let $\Omega\subset\R^n$ and denote by $\Sf^n$ the set of symmetric $n\x n$ matrices.  The operator $F:\Omb\x\R\x\R^n\x\Sf^n \to \R$ is said to be \emph{degenerate elliptic} if 
\begin{equation*}
    F(x,u,p,X) \leq F(x,v,p,Y)
\end{equation*}
whenever $u \leq v$ and $ X \succeq Y$.
\end{definition}
We note that the operator is defined on the closure of $\Omega$, and takes on the value of the relevant boundary conditions at $\pOm$. For the Dirichlet problem, which is the setting implemented in this article, the PDE operator at the boundary is defined as
\bq\label{eq:dirichletOp}
F(x,u(x),\nabla u(x),D^2u(x)) = u(x) - g(x), \quad x \in \partial\Omega.
\eq

The \MA equation~\eqref{eq:MA} does not immediately satisfy this definition of an elliptic equation; in fact, it holds only on the restricted class of convex functions.  Going hand-in-hand with this difficulty is the fact that the solution of the \MA equation is not expected to be unique; the additional constraint that $u$ is convex is needed in order to select a unique solution.  A common remedy to these challenges is to define a globally elliptic extension of the \MA equation that automatically enforces solution convexity~\cite{Hamfeldt_Gauss}.  This can be accomplished by considering the convexified \MA operator
\bq\label{eq:MAconvex}
F(x,u(x),\nabla u(x), D^2u(x)) = -{\det}^+(D^2u(x)) + f(x), \quad x \in \Omega.
\eq
Here the modified determinant ${\det}^+$ should agree with the usual determinant when operating on the Hessian of a convex function and should return a negative value otherwise.  The particular choice utilized in this article is
\bq\label{eq:detPlus}
{\det}^+(M) = \begin{cases} \det(M), & M \succeq 0\\ \lambda_1(M), & \text{otherwise.}  \end{cases}
\eq
where $\lambda_1(M) \leq \ldots \leq \lambda_n(M)$ are the eigenvalues of the symmetric matrix $M$.

In general, degenerate elliptic equations need not have classical solutions, and some notion of weak solution is required.  {The Aleksandrov solution provides a geometric interpretation in terms of the subgradient measure, which allows for very general right-hand sides, including measures that do not have an associated density~\cite{Gutierrez}. Though slightly less general, the viscosity solution has proved to be particularly useful for this class of equations~\cite{CIL}, and forms the foundation for most of the recently developed numerical convergence proofs for the \MA equation.} The idea of the viscosity solution is to use a maximum principle argument to pass derivatives onto smooth test functions that lie above or below the semi-continuous envelopes of the candidate weak solution.

\begin{definition}[Semi-continuous envelopes]
Let $u:\Omega \ra \R$ be a bounded function. Then for $x\in\Omb$, the \emph{upper and lower semi-continuous envelopes} are defined, respectively, as
\begin{equation*}
    u^*(x) = \limsup_{y\ra x} u(y), \quad u_*(x) = \liminf_{y\ra x} u(y).
\end{equation*}
\end{definition}

\begin{definition}[Viscosity subsolutions (supersolutions)]
A bounded upper (lower) semi-continuous function $u$ is a \emph{viscosity subsolution (supersolution)} of~\eqref{eq:elliptic} if for every $\phi \in C^2(\Omb)$, that whenever $u-\phi$ has a local maximum (minimum) at $x\in\Omb$, then
\begin{equation*}
    F_*^{(*)}(x,u(x),\nabla \phi(x),D^2\phi(x)) \leq (\geq) 0.
\end{equation*}
\end{definition}
\begin{definition}[Viscosity Solution]
A bounded function $u:\Omb \ra \R$ is a \emph{viscosity solution} of~\eqref{eq:elliptic} if $u^*(x)$ is a viscosity subsolution and $u_*(x)$ is a viscosity supersolution.
\end{definition}

An important characteristic of many elliptic operators, which immediately yields solution uniqueness, is the comparison principle.
\begin{definition}[Comparison Principle]
The operator~\eqref{eq:elliptic} satisfies a \emph{strong comparison principle} if whenever $u$ is an upper semi-continuous subsolution and $v$ a lower semi-continuous supersolution, then $u \leq v$ on $\Omb$
\end{definition}

We remark that the Dirichlet problem for the \MA equation~\eqref{eq:MA},\eqref{eq:dirichlet} does satisfy a comparison principle under reasonable assumptions on the data. However, this is no longer true when the right-hand side depends on the solution gradient or when the second type boundary condition~\eqref{eq:otbc} is considered~\cite{Hamfeldt_Gauss,Hamfeldt_OTBC}. 

\subsection{Convergence framework}
A fruitful technique for numerically solving fully nonlinear elliptic equations involves finite difference schemes of the form
\begin{equation}\label{eq:scheme}
    F^{h}(x,u(x),u(x) - u(\cdot)) = 0
\end{equation}
defined on a finite set of discretization points $\G\subset\Omega$.
Many key results on the convergence of finite difference methods to the viscosity solution of a degenerate elliptic PDE are based upon a set of criterion developed by Barles and Souganidis~\cite{BSnum}.

\begin{definition}[Consistency]\label{def:consist}
The scheme~\eqref{eq:scheme} is \emph{consistent} with~\eqref{eq:elliptic} if, for any test function $\phi \in C^{2,1}(\Omb)$ and $x \in \Omb$, we have
\begin{align}
    \limsup_{h\ra 0^+,y\ra x, \xi \ra 0}F^h(y,\phi(y)+\xi,\phi(y)-\phi(\cdot)) \leq F^*(x,\phi(x),\nabla\phi(x),D^2\phi(x))
    \\
    \liminf_{h\ra 0^+,y\ra x, \xi \ra 0}F^h(y,\phi(y)+\xi,\phi(y)-\phi(\cdot)) \geq F_*(x,\phi(x),\nabla\phi(x),D^2\phi(x)).
\end{align}
\end{definition}

To a consistent scheme we can also assign a local truncation error.
\begin{definition}[Truncation error]\label{def:truncation}
The \emph{truncation error} of a scheme~\eqref{eq:scheme} on a set of admissible functions $\Phi$ is a function $\tau(h)$ such that for any $\phi\in\Phi$ there exists a constant $C\geq 0$ such that
\[ \abs{F^h(x,\phi(x),\phi(x)-\phi(\cdot)) - F(x,\phi(x),\nabla \phi(x), D^2 \phi(x))} \leq C\tau(h) \]
for every $x\in\G$ and sufficiently small $h>0$.
\end{definition}

\begin{definition}[Monotonicity]\label{def:mono}
The scheme~\eqref{eq:scheme} is \emph{monotone} if $F^h$ is a non decreasing function of its last two arguments. 
\end{definition}

\begin{definition}[Stability]\label{def:stab}
The scheme~\eqref{eq:scheme} is \emph{stable} if there exists some $M>0$, independent of $h$, such that every solution $u^h$ satisfies $||u^h||_{\infty} < M$.
\end{definition}

These simple concepts lead immediately to convergence of finite difference methods, provided the underlying PDE satisfies a strong comparison principle.
\begin{theorem}[Convergence~\cite{BSnum}]\label{thrm:Conv}
Let $u$ be the unique viscosity solution of the PDE~\eqref{eq:elliptic}, where $F$ is a degenerate elliptic operator with a strong comparison principle.   Let $u^h$ be any solution of~\eqref{eq:scheme} where $F^h$ is a consistent, monotone, stable approximation scheme. Then $u^h$ converges uniformly to $u$ as $h\to0$.
\end{theorem}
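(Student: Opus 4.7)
The plan is to follow the standard Barles--Souganidis half-relaxed limit argument. I would begin by defining
\begin{equation*}
\bar u(x) = \limsup_{h\to 0^+,\,y\to x} u^h(y), \qquad \underline u(x) = \liminf_{h\to 0^+,\,y\to x} u^h(y),
\end{equation*}
which are bounded functions on $\Omb$ by the stability hypothesis (Definition~\ref{def:stab}) and satisfy $\underline u \leq \bar u$ by construction. The central claim is that $\bar u$ is a viscosity subsolution and $\underline u$ a viscosity supersolution of~\eqref{eq:elliptic}. Once this is granted, the strong comparison principle applied to the pair $(\bar u, \underline u)$ forces $\bar u \leq \underline u$, and together with the reverse inequality we obtain $\bar u = \underline u$; a second comparison between this common function and the unique viscosity solution $u$ then identifies the limit as $u$ itself. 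Since $u$ is continuous on the compact set $\Omb$, the equality of relaxed limits $\bar u = \underline u = u$ is equivalent to uniform convergence $u^h \to u$.

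To prove the subsolution property for $\bar u$, I would let $\phi \in C^2(\Omb)$ be such that $\bar u - \phi$ attains a strict local maximum at some $x_0 \in \Omb$, strictness being arranged by perturbation with a small quadratic. A standard compactness argument using the definition of the upper envelope produces sequences $h_n \to 0^+$ and grid points $x_n \in \G$ with $x_n \to x_0$, $u^{h_n}(x_n) \to \bar u(x_0)$, and $u^{h_n} - \phi$ attaining a local maximum at $x_n$ on the grid. Writing $\xi_n = u^{h_n}(x_n) - \phi(x_n) \to 0$, the maximum property $u^{h_n}(y) \leq \phi(y) + \xi_n$ rearranges to $u^{h_n}(x_n) - u^{h_n}(y) \geq \phi(x_n) - \phi(y)$ pointwise in $y$. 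Monotonicity of $F^{h_n}$ in its third argument (Definition~\ref{def:mono}) combined with the scheme equation $F^{h_n}(x_n, u^{h_n}(x_n), u^{h_n}(x_n) - u^{h_n}(\cdot)) = 0$ then yields
\begin{equation*}
F^{h_n}\bigl(x_n,\,\phi(x_n) + \xi_n,\,\phi(x_n) - \phi(\cdot)\bigr) \leq 0.
\end{equation*}
Passing to $\liminf_{n\to\infty}$ and invoking the liminf half of the consistency condition (Definition~\ref{def:consist}) converts this into $F_*(x_0, \bar u(x_0), \nabla\phi(x_0), D^2\phi(x_0)) \leq 0$, which is precisely the subsolution inequality. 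The supersolution property for $\underline u$ follows by the symmetric argument at a strict local minimum of $\underline u - \phi$ and the limsup half of consistency.

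The main obstacle is the construction of the discrete maximizing sequence $(x_n, h_n)$ in the second paragraph: because $\bar u$ need only be upper semicontinuous and because the grid $\G$ itself varies with $h$, producing a sequence of genuine local maximizers of $u^{h_n} - \phi$ whose values also realize $\bar u(x_0)$ requires some care. The strictness of the maximum together with the uniform stability bound are what prevent the discrete maximizers from drifting away from $x_0$ or migrating to $\partial\Omega$, so that $x_n$ remains in a small closed ball on which $u^{h_n} - \phi$ attains an interior maximum. Once that construction is in hand, monotonicity and consistency combine in a purely algebraic way to transfer the discrete scheme inequality to the desired viscosity inequality in the limit, and no further input specific to the \MA setting is required.
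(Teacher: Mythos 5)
Your proposal is correct and is precisely the standard Barles--Souganidis half-relaxed limits argument; the paper does not reproduce a proof of this theorem but simply cites~\cite{BSnum}, whose proof follows exactly the route you describe (define $\bar u$ and $\underline u$ via stability, show they are sub/supersolutions using monotonicity plus consistency at discrete maximizers of $u^{h_n}-\phi$, then squeeze with the strong comparison principle). No gaps; the technical point you flag about extracting the maximizing sequence is handled in the standard way by the strict-maximum perturbation you already mention.
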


This result does apply to the \MA equation~\eqref{eq:MA} with Dirichlet boundary conditions~\eqref{eq:dirichlet} under mild assumptions on the data.  However, many other \MA equations of interest do not possess the strong comparison principle required by the theorem.  In recent years, the convergence proof has been adapted to include discontinuous solutions of the non-classical Dirichlet problem~\cite{Hamfeldt_Gauss} and the second boundary value problem~\cite{BenamouDuval_MABVP2,Bonnet_OTBC,Hamfeldt_OTBC}.

\subsection{Wide stencil methods}
Several monotone finite difference approximations have been proposed for the \MA operator~\cite{benamou2014monotone,Bonnet_OTBC,WanMA,FO_MATheory,Nochetto_MAConverge}.  These hinge upon different reformulations of the \MA operator, which typically take a variational form
\bq\label{eq:MAvar}
\det(D^2u) = \min\limits_{(\nu_1, \ldots, \nu_k) \in \Af} G\left(u_{\nu_1\nu_1}, \ldots, u_{\nu_k\nu_k}\right).
\eq
Here $u_{\nu\nu}$ denotes the second directional derivative (SDD) in the direction $\nu\in\R^n$, $\Af$ is some admissible set, and $G$ is a non-decreasing function.  Generating a monotone approximation then requires (1) an appropriate discretization of the relevant SDDs and (2) an appropriate discretization of the admissible set.

On a structured grid, where aligned points $x$, $x+h^+\nu$, and $x-h^-\nu$ are available for some $h^-, h^+>0$, a simple (negative) monotone approximation is
\begin{equation}\label{eq:sdd}
\begin{split}
    \Dt_{\nu\nu} u(x) &\equiv 2\frac{h^-u(x+h^+\nu)+h^+u(x-h^-\nu)-(h^++h^-)u(x)}{h^+h^-(h^++h^-)}\\
      &= u_{\nu\nu}(x) + \bO(h^+-h^-) + \bO{\left({(h^+)^2+(h^-)^2}\right)}.
\end{split}
\end{equation}
These approximations are typically allowed to have a wide-stencil flavor, with the spacing $h^+$, $h^-$ being potentially larger than the characteristic spacing $h$ of grid points.  See Figure~\ref{fig:stencil}.  We also remark that in the special case of equi-spaced neighboring points ($h^+=h^-$), such as on a uniform Cartesian grid, this reduces to the usual centered difference approximation with second order truncation error.  Monotone approximations are also possible on unstructured grids, though they are typically less accurate~\cite{FroeseMeshfreeEigs}.

\begin{figure}
    \centering
    \includegraphics[width=0.35\textwidth, height=0.35\textwidth]{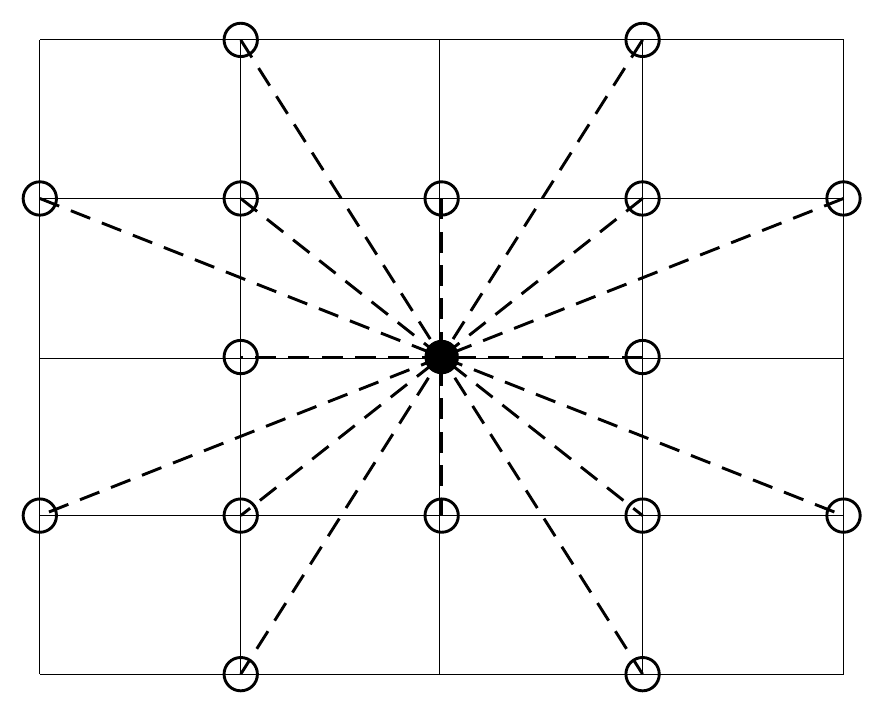}
    \caption{Wide finite difference stencils.}
    \label{fig:stencil}
\end{figure}

The width of stencils required by the approximations of~\eqref{eq:MAvar} is determined by the discretization of the admissible set $\Af$.  Optimal discretization of this set is itself a non-trivial problem and evaluating~\eqref{eq:MAvar} may involve minimization over a prohibitively large set of candidate directions, particularly in three dimensions~\cite{HL_ThreeDimensions}.  A typical scaling for the maximal stencil width that optimizes truncation error is at least $\bO(\sqrt{h})$~\cite{FroeseMeshfreeEigs}, though in some cases it is not clear what the optimal choice is.

\section{Integral Formulation}\label{sec:integral}
In this section, we present an integral representation of $\maop$ which can be used to create a monotone discretizaton through the use of quadrature. 

To motivate this, we recall a well know result about the integrals of multivariate Gaussians:
\begin{equation}
    \det(M) = \pi^{n}\bracs{\int_{\R^n}e^{-v^TMv}dV}^{-2}
\end{equation}
where $M$ is a symmetric positive-definite $n\times n$ matrix. 

This provides an alternate characterization of the \MA operator if we let $M=D^2u(x)$ be the Hessian of the potential function $u$.  Provided $u$ is strictly convex, its Hessian is positive definite and we can write
\bq\label{eq:integral}
\det(D^2u(x)) = \pi^{n}\bracs{\int_{\R^n}e^{-v^TD^2u(x)v}dV}^{-2}.
\eq

To express this in a form that is easily discretized, we convert to spherical coordinates.  Let $r=\abs{v}$ and $\hat{v} = v/r$ and denote by $u_{\hat{v}\hat{v}} = \hat{v}^TD^2u(x)\hat{v}$ the second directional derivative of $u$ in the direction of $\hat{v}$.  Then the \MA operator in $\R^n$ can be expressed as
\bq\label{eq:integralMA}
\det(D^2u(x)) = \pi^{n}\bracs{\int_{\hat{v}\in\mathbb{S}^{n-1}}\int_0^{\infty}r^{n-1}e^{-r^2u_{\hat{v}\hat{v}}}dr d\hat{V}}^{-2}.
\eq
Integrating out the radius $r$, we obtain
\bq\label{eq:integralMARn}
\det(D^2u(x)) = \frac{4\pi^n}{\Gamma(n/2)^2}\bracs{\int_{\mathbb{S}^{n-1}}(u_{\hat{v}\hat{v}})^{-n/2}d\hat{V}}^{-2}.
\eq

For simplicity, the results in the remainder of this paper are presented in two dimensions. However, they are certainly generalizable to higher dimensions. We introduce the notation
\[ u_{\theta\theta} = \frac{\partial^2u}{\partial\nu^2}, \quad \nu = (\cos\theta, \sin\theta) \]
and note that
\[ u_{\theta\theta} = u_{\theta+\pi,\theta+\pi}. \]
Then we easily obtain the two-dimensional version of~\eqref{eq:integralMARn} in terms of polar coordinates. \begin{theorem}[Integral Representation]\label{thrm:integral}
Let $\Omega\subset\R^2$ be convex and $u \in C^2(\Omega)$ be strictly convex.  Then for every $x\in\Omega$:
\begin{equation}\label{eq:IntForm}
    \det(D^2u(x)) = \bracs{\frac{1}{\pi}\int_{0}^{\pi}\frac{d\theta}{\utt(x)}}^{-2}.
\end{equation}
\end{theorem}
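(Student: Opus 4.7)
My plan is to specialize the general $n$-dimensional integral representation already derived in the paper, equation~\eqref{eq:integralMARn}, to the case $n=2$ and then exploit the $\pi$-periodicity of the second directional derivative to restrict the angular integration to $[0,\pi]$. The strict convexity hypothesis enters exactly to ensure $D^2u(x)\succ 0$, so that both the Gaussian integral used to derive~\eqref{eq:integral} and the reciprocal $1/u_{\theta\theta}(x)$ appearing in~\eqref{eq:IntForm} are well defined and positive for every direction.

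First I would verify the Gaussian identity $\det(M)=\pi^n\bigl(\int_{\R^n}e^{-v^TMv}\,dV\bigr)^{-2}$ for symmetric positive-definite $M$ by an orthogonal diagonalization $M=Q^T\Lambda Q$ and the standard one-dimensional formula $\int_\R e^{-\lambda s^2}\,ds=\sqrt{\pi/\lambda}$, which gives $\int_{\R^n}e^{-v^TMv}\,dV=\pi^{n/2}/\sqrt{\det M}$. Taking $M=D^2u(x)$, which is positive definite by strict convexity, yields~\eqref{eq:integral}. Passing to spherical coordinates $v=r\hat v$ with $\hat v\in\mathbb{S}^{n-1}$ turns the quadratic form into $v^TD^2u(x)v=r^2u_{\hat v\hat v}(x)$ and produces~\eqref{eq:integralMA}.

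Next I would carry out the radial integration. Using the substitution $s=r^2u_{\hat v\hat v}$ (valid since $u_{\hat v\hat v}>0$), one obtains
\begin{equation*}
\int_0^\infty r^{n-1}e^{-r^2u_{\hat v\hat v}}\,dr=\frac{\Gamma(n/2)}{2\,u_{\hat v\hat v}^{n/2}},
\end{equation*}
which inserted into~\eqref{eq:integralMA} gives the clean formula~\eqref{eq:integralMARn}. Setting $n=2$ and using $\Gamma(1)=1$ reduces this to
\begin{equation*}
\det(D^2u(x))=4\pi^2\bracs{\int_{\mathbb{S}^{1}}\frac{d\hat V}{u_{\hat v\hat v}(x)}}^{-2}.
\end{equation*}

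Finally I would parametrize $\mathbb{S}^1$ by $\hat v=(\cos\theta,\sin\theta)$ with $\theta\in[0,2\pi)$ so that $d\hat V=d\theta$ and $u_{\hat v\hat v}=u_{\theta\theta}$. The relation $u_{\theta\theta}=u_{\theta+\pi,\theta+\pi}$ noted in the paper gives $\int_0^{2\pi}d\theta/u_{\theta\theta}=2\int_0^\pi d\theta/u_{\theta\theta}$, and substituting back yields
\begin{equation*}
\det(D^2u(x))=4\pi^2\bracs{2\int_0^\pi\frac{d\theta}{u_{\theta\theta}(x)}}^{-2}=\bracs{\frac{1}{\pi}\int_0^\pi\frac{d\theta}{u_{\theta\theta}(x)}}^{-2},
\end{equation*}
which is~\eqref{eq:IntForm}. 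There is no real obstacle here; the only points to be careful about are justifying positive definiteness of $D^2u(x)$ from strict convexity (so the Gaussian integral converges and the reciprocal integrand is finite) and bookkeeping the constants $\Gamma(n/2)$ and the factor of $2$ coming from the $\pi$-periodicity.
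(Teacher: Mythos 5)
Your proof is correct and follows essentially the same route as the paper: the Gaussian determinant identity, conversion to spherical coordinates, radial integration via the Gamma function, and specialization to $n=2$ with the $\pi$-periodicity of $u_{\theta\theta}$ supplying the factor of $2$. The constants check out ($\Gamma(1)=1$ and $4\pi^2\cdot(1/2)^2=\pi^2$), so this fills in exactly the computation the paper leaves as ``easily obtained.''
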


The characterization in theorem~\ref{thrm:integral} only holds when $D^2u(x)\succ 0$.  However, we are also interested in degenerate cases where $\det(D^2u(x)) = 0$. In these cases, we know that $D^2u(x)$ has at least eigenvalue equal to zero, and the integrand in~\eqref{eq:IntForm} becomes singular.

We introduce the following relaxation to approximate the integral in these cases:
\begin{equation}
    \det{}_{{\varepsilon_1}}(D^2u(x)) = \bracs{\frac{1}{\pi}\int_{0}^{\pi}\frac{d\theta}{\max(\utt(x),{\varepsilon_1})}}^{-2}.
\end{equation}
This, in turn, is used to construct a relaxed version of the convexified \MA operator:
\begin{equation}
   {\det}^+_{{\varepsilon_1,\epsilon_2}}(D^2u(x)) = \det{}_{{\varepsilon_1}}(D^2u(x)) + \min_{\theta \in [0,\pi)}\{\min(\utt,{\varepsilon_2})\}.
\end{equation}
Here we have represented $\lambda_1(D^2u)$ as $\min\{\utt\}$, which is equivalent via the minimax principle.

\section{Quadrature Scheme}\label{sec:quadScheme}
In this section, we describe a very general framework for utilizing the integral formulation~\eqref{eq:integralMA} to produce a consistent, monotone approximation of the two-dimensional \MA equation.  In \autoref{sec:method}, we will describe two particular implementations.
\subsection{Approximation scheme}
We introduce the following notation.
\begin{definition}[Notation]\label{def:notation}
\end{definition}
\begin{enumerate}
\item[(N1)] $\Omega\subset\R^2$ is a bounded, open, convex domain with Lipschitz boundary $\partial\Omega$.
\item[(N2)] $\G\subset\bar{\Omega}$ is a finite set of discretization points $x_i$, $i=1,\ldots,N$.
\item[(N3)] $h = \sup\limits_{x\in{\Omega}}\min\limits_{y\in\G}\abs{x-y}$ is the spatial resolution of the grid.  In particular, every ball of radius $h$ contained in $\bar{\Omega}$ contains at least one discretization point $x_i$.
\item[(N4)] $r \geq h$ is a stencil width associated to the grid.
\item[(N5)] $0 \leq \theta_0 < \ldots < \theta_M < \pi$ is a finite set of angles discretizing $[0,\pi)$.
\item[(N6)] $d\theta_i = \theta_{i+1}-\theta_i$ is the local angular resolution of the discretization, where we define $d\theta_M = \theta_0 + \pi - \theta_M$.
\item[(N7)] $d\theta = \max\limits_{i=0,\ldots,M}\{d\theta_i\}$ is the angular resolution of the discretization.  
\item[(N8)] $Q = \dfrac{d\theta}{\min\limits_{i=0,\ldots,M} d\theta_i}$ is the \emph{quasi-uniformity constant} of the angular discretization.
\item[(N9)] $w_0, \ldots, w_M$ is a collection of non-negative quadrature weights summing to $\pi$ and satisfying
\[ w_k \geq c d\theta \]
for some constant $c>0$ that depends only on the quasi-uniformity constant.
\item[(N10)] {$\epsilon_1>0$ and $\epsilon_2 \geq 0$ are regularization parameters associated with the grid.}
\item[(N11)] $\Nf(x)\subset\{1, \ldots, N\}$ is the set of neighboring indices for $x\in\G\cap\Omega$ such that for every $j\in\Nf(x)$ we have $0 < \abs{x_j-x} \leq r$.
\item[(N12)] $\Dt_{\theta\theta}u(x)$ described in~\eqref{eq:sdd} has the form
\[ \Dt_{\theta\theta}u(x) = \sum\limits_{j\in\Nf(x)}a_j(\theta)\left(u(x_j)-u(x)\right) \]
for every $x\in\G\cap\Omega$, where all $a_i \geq 0$.
\item[(N13)] $\tau_\theta(r)$ is the truncation error of the finite difference scheme $\Dt_{\theta\theta}u$ for approximating the second directional derivative $u_{\theta\theta}$ on the admissible set $\Phi = C^{2,1}(\Omega)$.
\item[(N14)] $\tau_{FD}(r) = \max\limits_{i=1,\ldots,M} \tau_{\theta_i}(r)$ is the maximal truncation error of the finite difference approximations.
\item[(N15)] $\tau_Q(d\theta)$ is the truncation error of the quadrature scheme 
\[ \sum\limits_{i=0}^M w_if(\theta_i) \]
for approximating the integral $\int_0^\pi f(\theta)\,d\theta$ on the admissible set $\Phi =  \{f\in C^\infty([0,\pi]) \mid f \text{ is periodic}\}$.
\end{enumerate}

Then we propose the following scheme for approximating the convexified \MA operator at interior points $x\in\G\cap\Omega$.
\bq\label{eq:quadscheme}
{G}^h(x,u(x),u(x)-u(\cdot)) =
-\left(\frac{1}{\pi}\sum\limits_{i=0}^M \frac{w_i}{\max\{\Dt_{\theta_i \theta_i}u(x), {\epsilon_1}\}} \right)^{-2}  - \min\limits_{i=0,\ldots,M}\left\{\Dt_{\theta_i\theta_i} u(x),{\epsilon_2}\right\}.
\eq

\subsection{Convergence}
We now provide conditions under which the scheme~\eqref{eq:quadscheme} is consistent and monotone.  As an immediate consequence, it fits directly into the convergence proofs developed in~\cite{BenamouDuval_MABVP2,Bonnet_OTBC,FO_MATheory,Hamfeldt_Gauss,Hamfeldt_OTBC,ObermanEigenvalues}.

\begin{theorem}[Monotonicity]\label{thm:monotonicity}
The approximation scheme~\eqref{eq:quadscheme} is  monotone.
\end{theorem}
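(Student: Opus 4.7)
The scheme must be shown to be non-decreasing in its second argument $u(x)$ and in each component of the vector of differences $u(x)-u(x_j)$ for $j\in\Nf(x)$. My plan is to first reduce to monotonicity in the differences alone, then chase that property through the (monotone) compositions that build up the scheme.

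Setting $p_j = u(x)-u(x_j)$, I would use (N12) to write $\Dt_{\theta_i\theta_i}u(x) = \sum_{j\in\Nf(x)} a_j(\theta_i)(u(x_j)-u(x)) = -\sum_{j\in\Nf(x)} a_j(\theta_i)\,p_j$ with $a_j(\theta_i)\geq 0$. This representation shows that $F^h$ depends on $u(x)$ only through the differences, so monotonicity in the middle argument is immediate, and that each $\Dt_{\theta_i\theta_i}u(x)$ is a non-increasing linear function of every $p_j$. For the first summand of~\eqref{eq:quadscheme}, the map $t\mapsto\max\{t,\epsilon\}$ is non-decreasing and bounded below by $\epsilon>0$, so $w_i/\max\{\Dt_{\theta_i\theta_i}u(x),\epsilon\}$ is non-decreasing in each $p_j$; the weighted sum $S=\tfrac{1}{\pi}\sum_i w_i/\max\{\Dt_{\theta_i\theta_i}u(x),\epsilon\}$ inherits this property and is strictly positive, and the composition $S\mapsto -S^{-2}$ is non-decreasing on $(0,\infty)$. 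For the second summand, $\min\{\Dt_{\theta_i\theta_i}u(x),\epsilon\}$ is a minimum of functions non-increasing in each $p_j$ and is therefore itself non-increasing in each $p_j$, so its negation is non-decreasing. Since the sum of non-decreasing maps is non-decreasing, monotonicity of $F^h$ in the differences follows.

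The main obstacle is essentially bookkeeping: because of the nested $\max/\min$ truncations, the reciprocal, the minus signs, and the squaring, one has to track carefully the direction of monotonicity at each step. The role of the $\epsilon$-regularization in this argument is not to enforce monotonicity, but to guarantee $\max\{\Dt_{\theta_i\theta_i}u(x),\epsilon\}\geq\epsilon>0$ so that $S$ and $S^{-2}$ are well-defined and finite even for test inputs whose discrete Hessian is degenerate or indefinite; without it, the $-S^{-2}$ term could blow up or change sign and the chain-of-compositions argument would break down near $\Dt_{\theta_i\theta_i}u(x)=0$.
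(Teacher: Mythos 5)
Your proof is correct and follows essentially the same route as the paper: rewrite each $\Dt_{\theta_i\theta_i}u(x)$ via (N12) as $-\sum_j a_j(\theta_i)p_j$ with $a_j(\theta_i)\geq 0$ and then propagate monotonicity through the $\max/\min$, reciprocal, and $-(\cdot)^{-2}$ compositions, noting that $F^h$ has no genuine dependence on its second argument. If anything, you are slightly more explicit than the paper in verifying that $t\mapsto -t^{-2}$ is increasing on $(0,\infty)$ and in isolating the role of the $\epsilon$-regularization, but the argument is the same.
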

\begin{proof}
We note that the operator that appears in~\eqref{eq:quadscheme} can be written in the form
\[ {G}^h(x,u,z) = -\left(\frac{1}{\pi}\mathlarger{\mathlarger{\sum}}\limits_{i=0}^M{\frac{w_i}{\max\{ -\sum\limits_{j\in \Nf(x)} a_{ij}z_j,\, {\epsilon_1} \}}}\right)^{-2} - \min\limits_{i=0, \ldots, M} \left\{-\sum\limits_{j\in \Nf(x)} a_{ij}z_j,\, {\epsilon_2}\right\} \]
where $z_j = u(x)-u(x_j)$ and the $a_{ij}$ are non-negative by the (negative) monotonicity of the approximations $\Dt_{\theta_i\theta_i}u(x)$.

Let $\delta\in\R^N$ have non-negative components.  We notice that
\[ -\sum\limits_{j\in \Nf(x)} a_{ij}(z_j + \delta_j) \leq -\sum\limits_{j\in \Nf(x)} a_{ij}z_j. \]

Since the max and min operators preserve monotonicity and the weights $w_i$ are non-negative, we can immediately conclude that
\[ {G}^h(x,u,z+\delta) \geq {G}^h(x,u,z). \]

Since ${G}^h$ has no dependence on its second argument, this completes the proof of monotonicity.
\end{proof}

\begin{theorem}[Consistency]\label{thm:consistency}
Consider discretizations $\G^h$ of $\bar{\Omega}$ such that the corresponding parameters 
\[  r, d\theta, \frac{{\epsilon_1}}{d\theta}, \frac{\tau_{FD}(r)}{d\theta}, \tau_Q(d\theta), {\epsilon_2} \to 0 \]
as $h\to0$ and the corresponding quasi-uniformity constants $Q$ are bounded uniformly.
Then the approximation scheme~\eqref{eq:quadscheme} is consistent with the convexified \MA operator~\eqref{eq:MAconvex}.
\end{theorem}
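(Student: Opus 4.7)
My plan is to verify the consistency condition by showing pointwise convergence of $F^h(y,\phi(y)+\xi,\phi(y)-\phi(\cdot))$ to $-{\det}^+(D^2\phi(x))$ as $h\to 0^+$, $y\to x$, $\xi\to 0$ for each $\phi\in C^{2,1}(\Omb)$ and $x\in\Omega$. Because the modified determinant~\eqref{eq:detPlus} is continuous on $\Sf^n$, the convexified operator coincides with both its upper and lower semicontinuous envelopes, and such a pointwise limit yields the two inequalities of Definition~\ref{def:consist}. I decompose $F^h = T_1 + T_2$, where $T_1$ is the quadrature term and $T_2 = -\min_i\{\Dt_{\theta_i\theta_i}\phi(y),\epsilon\}$. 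The universal starting point is the finite-difference estimate $\Dt_{\theta_i\theta_i}\phi(y) = \phi_{\theta_i\theta_i}(y) + O(\tau_{FD}(r))$ from (N13), combined with $\phi_{\theta_i\theta_i}(y)\to\phi_{\theta_i\theta_i}(x)$ uniformly in $i$ as $y\to x$, available since $\phi\in C^{2,1}$. The remainder is a case analysis on the smallest eigenvalue $\lambda_1 := \lambda_1(D^2\phi(x))$.

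If $\lambda_1 > 0$, then $\phi_{\theta\theta}(x) \geq \lambda_1$ uniformly in $\theta$, and the scaling hypotheses $\tau_{FD}(r)/d\theta,\ \epsilon/d\theta\to 0$ ensure $\Dt_{\theta_i\theta_i}\phi(y) > \epsilon$ for every $i$ once $h$ is small, so each max collapses to $\Dt_{\theta_i\theta_i}\phi(y)$ and each min to $\epsilon$; hence $T_2 = -\epsilon\to 0$. Because $\theta\mapsto 1/\phi_{\theta\theta}(x)$ is smooth and $\pi$-periodic, quadrature consistency (N15) together with the FD estimate yields $\frac{1}{\pi}\sum_i\frac{w_i}{\Dt_{\theta_i\theta_i}\phi(y)} \to \frac{1}{\pi}\int_0^\pi\frac{d\theta}{\phi_{\theta\theta}(x)}$, and Theorem~\ref{thrm:integral} identifies this integral as $1/\sqrt{\det(D^2\phi(x))}$. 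Thus $T_1 \to -\det(D^2\phi(x)) = -{\det}^+(D^2\phi(x))$. If instead $\lambda_1 < 0$, the set $S = \{\theta\in[0,\pi) : \phi_{\theta\theta}(x) < \lambda_1/2\}$ is open with positive Lebesgue measure; for small $h$ and $\theta_i\in S$ one has $\Dt_{\theta_i\theta_i}\phi(y) < 0 < \epsilon$, so those nodes each contribute $w_i/\epsilon$ to the sum. The count $\gtrsim \mu(S)/d\theta$, together with (N9), yields a lower bound $\gtrsim \mu(S)/\epsilon$, so $T_1 = O(\epsilon^2)\to 0$. The angular density of $\{\theta_i\}$ also makes $\min_i\Dt_{\theta_i\theta_i}\phi(y)\to\lambda_1 < 0 < \epsilon$, whence $T_2 \to -\lambda_1 = -{\det}^+(D^2\phi(x))$.

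The borderline case $\lambda_1 = 0$ is the principal obstacle. Picking a degenerate direction $\theta_0$ with $\phi_{\theta_0\theta_0}(x) = 0$, the two-dimensional spectral identity $\phi_{\theta\theta}(x) = \lambda_2\sin^2(\theta-\theta_0)$ shows the nearest quadrature node $\theta_{i_0}$ satisfies $\phi_{\theta_{i_0}\theta_{i_0}}(x) = O(d\theta^2)$. Combining with the FD estimate, the Lipschitz continuity of $D^2\phi$, and the scaling hypotheses yields $\max\{\Dt_{\theta_{i_0}\theta_{i_0}}\phi(y),\epsilon\} = o(d\theta)$, so the single contribution $w_{i_0}/\max \geq c\,d\theta/o(d\theta)\to\infty$ drives the quadrature sum to infinity and forces $T_1\to 0$. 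The same estimate gives $\min_i\Dt_{\theta_i\theta_i}\phi(y)\to 0$, and together with $\epsilon\to 0$ this yields $T_2\to 0$, matching $-{\det}^+(D^2\phi(x)) = 0$. The joint scalings $\epsilon/d\theta\to 0$ and $\tau_{FD}(r)/d\theta\to 0$ are precisely what is needed to control the error terms near the degenerate direction, and the careful tracking of these terms in this borderline case is the technical heart of the argument.
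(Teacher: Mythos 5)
Your overall architecture --- reducing to pointwise convergence (legitimate here since ${\det}^+$ is continuous and the scheme ignores its first two arguments) and then splitting on the sign of $\lambda_1(D^2\phi(x))$ --- is exactly the paper's, and your positive- and negative-eigenvalue cases are sound. (In the negative case you lower-bound the quadrature sum using all nodes in a set of positive measure where the second directional derivative is negative, while the paper uses only the single node nearest the eigendirection; both work.) The gap is in the case $\lambda_1 = 0$, in precisely the step you flag as the technical heart. You claim $\max\{\Dt_{\theta_{i_0}\theta_{i_0}}\phi(y),\epsilon\} = o(d\theta)$ for the node $\theta_{i_0}$ nearest the degenerate direction. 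But the best available estimate is $\Dt_{\theta_{i_0}\theta_{i_0}}\phi(y) = O(d\theta^2 + \abs{x-y} + \tau_{FD}(r))$, and the consistency definition takes $y\to x$ and $h\to 0^+$ jointly with \emph{no} relation between $\abs{x-y}$ and $d\theta$: nothing prevents, say, $\abs{x-y}\sim\sqrt{d\theta}$, in which case the denominator is much larger than $d\theta$, the single contribution $w_{i_0}/\max\{\Dt_{\theta_{i_0}\theta_{i_0}}\phi(y),\epsilon\}$ tends to zero rather than infinity, and your conclusion that the quadrature sum blows up collapses. (Your $T_2$ estimate in this case survives, since it needs only $\abs{x-y}\to0$.)

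The repair, which is what the paper does, is to use not one node but a block of them whose size adapts to $\abs{x-y}$. Set $s = \max\{d\theta, \sqrt{\abs{x-y}}\}$; the $K = O(s/d\theta)$ nodes in $[0,s]$ all satisfy $\max\{\Dt_{\theta_i\theta_i}\phi(y),\epsilon\} = O(\abs{x-y}+s^2+\tau_{FD}(r)+\epsilon)$, so the sum is bounded below by $cs/O(\abs{x-y}+s^2+\tau_{FD}(r)+\epsilon)$ and the quadrature term is $O\bigl(\abs{x-y}^2/s^2 + s^2 + \tau_{FD}(r)^2/s^2+\epsilon^2/s^2\bigr)$. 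Applying $s\geq\sqrt{\abs{x-y}}$ to the first term and $s\geq d\theta$ to the last two shows every term vanishes under the stated hypotheses. Without some device of this kind, the vanishing-eigenvalue case is not proved.
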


We will break this result into three separate cases (Lemmas~\ref{lem:consistentPositive}-\ref{lem:consistentZero}), depending on the sign of $\lambda_1(D^2u)$, the smallest eigenvalue of the Hessian.  We note that the scheme ${G}^h(x,u,z)$ appearing in~\eqref{eq:quadscheme}  has no dependence on the first two arguments, which allows us to simplify slightly the verification of consistency.

\begin{lemma}[Consistency with positive eigenvalues]\label{lem:consistentPositive}
Under the assumptions of Theorem~\ref{thm:consistency}, let $u\in C^{2,1}(\Omega)$ and consider  $x\in\Omega$ such that $\lambda_1(D^2u(x)) > 0$.  Then the scheme~\eqref{eq:quadscheme} satisfies
\[ \lim\limits_{y\in\G\to x, h\to0} {G}^h(y,u(y),u(y)-u(\cdot)) = -{\det}^+(D^2u(x)). \]
\end{lemma}
\begin{proof}
Since the smallest eigenvalue $\lambda_1(D^2u(x))$ is strictly positive and $u\in C^{2,1}$, we are assured that
\[ \lambda_1(D^2u(y)) > \frac{1}{2}\lambda_1(D^2u(x)) > {\epsilon_k} + {\bO}(\tau_{FD}(r)), {\quad k \in \{1, 2\}} \]
for all $y$ sufficiently close to $x$ and sufficiently small ${\epsilon_1, \epsilon_2, r}$.  Then using the consistency error in the components of this scheme, we can compute
\begin{align*}
{G}^h(y,&u(y),u(y)-u(\cdot)) 
  = -\left(\frac{1}{\pi}\sum\limits_{i=0}^M \frac{w_i}{\max\{\Dt_{\theta_i \theta_i}u(y), {\epsilon_1}\}} \right)^{-2}  - \min\limits_{i=0,\ldots,M}\left\{\Dt_{\theta_i\theta_i} u(y),{\epsilon_2}\right\}\\
  &= -\left(\frac{1}{\pi}\sum\limits_{i=0}^M \frac{w_i}{\max\{u_{\theta_i\theta_i}(y) + {\bO}(\tau_{FD}(r)),{\epsilon_1}\}} \right)^{-2}  - \min\limits_{i=0,\ldots,M}\left\{u_{\theta_i\theta_i}(y) + {\bO}(\tau_{FD}(r)),{\epsilon_2}\right\}\\
  &= -\left(\frac{1}{\pi}\sum\limits_{i=0}^M \frac{w_i}{u_{\theta_i\theta_i}(y) + {\bO}(\tau_{FD}(r))} \right)^{-2}  - {\epsilon_2}\\
  &= -\left(\frac{1}{\pi}\sum\limits_{i=0}^M \frac{w_i}{u_{\theta_i\theta_i}(y)} + \bO(\tau_{FD}(r))\sum\limits_{i=0}^M\frac{w_i}{u_{\theta_i\theta_i}(y)^2} \right)^{-2}  - {\epsilon_2}.
\end{align*}

Since $u_{\theta_i\theta_i}(y)$ is continuous in $\theta$ and bounded away from zero, the two sums in the last line can both be interpreted as consistent quadrature schemes.  Thus we can further estimate
\begin{align*}
{G}^h(y,&u(y),u(y)-u(\cdot)) = -\left(\frac{1}{\pi}\int_0^\pi \frac{1}{u_{\theta\theta}(y)}\,d\theta + \bO(\tau_Q(d\theta)+\tau_{FD}(r)) \right)^{-2}  - {\epsilon_2}.
\end{align*}

Recalling now the integral formulation of the \MA operator~\eqref{eq:IntForm}, we conclude that
\[
{G}^h(y,u(y),u(y)-u(\cdot)) = -\det(D^2u(y)) + \bO(\tau_Q(d\theta)+\tau_{FD}(r) + {\epsilon_2}).
\]

Since $u\in C^{2,1}$ and all eigenvalues of the Hessian are strictly positive, we conclude that
\[ \lim\limits_{y\in\G\to x, h\to0}{G}^h(y,u(y),u(y)-u(\cdot)) = -{\det}^+(D^2u(x)). \]
\end{proof}

\begin{lemma}[Consistency with a negative eigenvalue]\label{lem:consistentNegative}
Under the assumptions of Theorem~\ref{thm:consistency}, let $u\in C^{2,1}(\Omega)$ and consider  $x\in\Omega$ such that $\lambda_1(D^2u(x)) < 0$.  Then the scheme~\eqref{eq:quadscheme} satisfies
\[ \lim\limits_{y\in\G\to x, h\to0} {G}^h(y,u(y),u(y)-u(\cdot)) = -{\det}^+(D^2u(x)). \]
\end{lemma}
\begin{proof}

Suppose without loss of generality that the coordinates are chosen so that the eigenvector corresponding to the smallest eigenvalue $\lambda_1(D^2u(y))$ is $v_1 = (1,0)$.  Then the second directional derivative of $u$ in the direction $\theta$ can be expressed as
\[ u_{\theta\theta}(y) = \lambda_1(y)\cos^2\theta + \lambda_2(y)\sin^2\theta.\]

Let us consider in particular the first angle $\theta_0 \leq d\theta$ in the angular discretization.  Since $u\in C^{2,1}$, the second directional derivative of $u$ in this direction satisfies
\begin{align*}
u_{\theta_0\theta_0}(y) &= \lambda_1(D^2u(y)) + \bO(d\theta^2) \\
  &= \lambda_1(D^2u(x)) + \bO(d\theta^2 + \abs{x-y}).
\end{align*}
Since $\lambda_1(D^2u(x))$ is strictly negative, it is certainly the case that for  $y$ sufficiently close to $x$ and small enough $r, d\theta$:
\begin{align*}
\Dt_{\theta_0\theta_0}u(y) &= u_{\theta_0\theta_0}(y) + {\bO}(\tau_{FD}(r))\\
&= \lambda_1(D^2u(x)) + \bO(\tau_{FD}(r) + d\theta^2 + \abs{x-y})\\
&{<0} \\ 
&< {\epsilon_k}, & {k \in \{1,2\}}.
\end{align*}

Now we perform a crude estimate on the sum in~\eqref{eq:quadscheme} by considering only a single term:
\begin{align*}
0 &\leq \left(\frac{1}{\pi}\sum\limits_{i=0}^M\frac{w_i}{\max\{\Dt_{\theta_i\theta_i}u(y),{\epsilon_1}\}}\right)^{-2}\\
  &\leq \left(\frac{1}{\pi}\frac{w_0}{\max\{\Dt_{\theta_0\theta_0}u(y),{\epsilon_1}\}}\right)^{-2}\\
  &= \frac{\pi^2{\epsilon_1}^2}{w_0^2}\\
  &\leq \frac{\pi^2{\epsilon_1}^2}{c^2d\theta^2}.
\end{align*}

Using the same estimates on the discrete second directional derivatives, we can also estimate the term
\begin{align*}
\min\limits_{i=0,\ldots,M}\{\Dt_{\theta_i\theta_i},{\epsilon_2}\} &= \min\{\lambda_1(D^2u(x)) + \bO(\tau_{FD}(r) + d\theta^2 + \abs{x-y}), {\epsilon_2}\}\\
  &= \lambda_1(D^2u(x)) + \bO(\tau_{FD}(r) + d\theta^2 + \abs{x-y}).
\end{align*}

By combining these estimates and recalling that ${\epsilon_1}/d\theta\to0$, we conclude that
\begin{align*}
\lim\limits_{y\in\G, h \to 0}{G}^h(y,u(y),u(y)-u(\cdot)) 
  &= -\lambda_1(D^2u(x))\\
  &= -{\det}^+(D^2u(x)).
\end{align*}
\end{proof}

\begin{lemma}[Consistency with a vanishing eigenvalue]\label{lem:consistentZero}
Under the assumptions of Theorem~\ref{thm:consistency}, let $u\in C^{2,1}(\Omega)$ and consider  $x\in\Omega$ such that $\lambda_1(D^2u(x))= 0$.  Then the scheme~\eqref{eq:quadscheme} satisfies
\[ \lim\limits_{y\in\G\to x, h\to0} {G}^h(y,u(y),u(y)-u(\cdot)) = -{\det}^+(D^2u(x)). \]
\end{lemma}
\begin{proof}
By the regularity of $u$, we know that
\[ \lambda_1(D^2u(y)) = \bO(\abs{x-y}). \]

Suppose without loss of generality that the coordinates are chosen so that the eigenvector corresponding to the smallest eigenvalue $\lambda_1(D^2u(y))$ is $v_1 = (1,0)$.  Then the second directional derivative of $u$ in the direction $\theta$ can be expressed as
\[ u_{\theta\theta}(y) = \lambda_1(y)\cos^2\theta + \lambda_2(y)\sin^2\theta.\]

Now we are going to estimate the sum in~\eqref{eq:quadscheme} by considering only the angles $\theta$ that are close to zero, which corresponds to the direction of the eigenvector $v_1$.  To this end, we define
\[ s = \max\{d\theta, \sqrt{\abs{x-y}}\}\]
and let $K = \bO(s/d\theta) \geq 1$ be the number of nodes $\theta_0, \ldots, \theta_{K-1}$ in the interval $[0,s]$.

We notice that for any $i = 0, \ldots, K-1$ we have
\begin{align*}
\max\{\Dt_{\theta_i\theta_i}u(y),{\epsilon_1}\} &\leq u_{\theta_i\theta_i}(y) + \tau_{FD}(r) + {\epsilon_1}\\
 &= \lambda_1(y)\cos^2\theta_i + \lambda_2(y)\sin^2\theta_i + \tau_{FD}(r) + {\epsilon_1}\\
 &= \bO(\abs{x-y} + s^2 + \tau_{FD}(r) + {\epsilon_1}).
\end{align*}

Then using the lower bound $w_i > c \,d\theta$ allows us to obtain the following bounds on the sum.
\begin{align*}
 \sum\limits_{i=0}^M\frac{w_i}{\max\{\Dt_{\theta_i\theta_i}u(y),{\epsilon_1}\}}
  &\geq \sum\limits_{i=0}^{K-1}\frac{w_i}{\max\{\Dt_{\theta_i\theta_i}u(y),{\epsilon_1}\}}\\
  &\geq  K\frac{c\,d\theta}{\bO(\abs{x-y} + s^2 + \tau_{FD}(r) + {\epsilon_1})}\\
  &= \frac{s}{d\theta}\frac{c\,d\theta}{\bO(\abs{x-y} + s^2 + \tau_{FD}(r) + {\epsilon_1})}\\
  &= \frac{cs}{\bO(\abs{x-y} + s^2 + \tau_{FD}(r) + {\epsilon_1})}.
\end{align*}

This allows us to obtain bounds on the following value appearing in the scheme:
\begin{align*}
\left(\frac{1}{\pi}\sum\limits_{i=0}^M \frac{w_i}{\max\{\Dt_{\theta_i \theta_i}u(y), {\epsilon_1}\}} \right)^{-2}
  &\leq \bO\left(\frac{\abs{x-y}^2}{s^2} + \frac{s^4}{s^2} + \frac{\tau_{FD}(r)^2}{s^2} + \frac{{\epsilon_1}^2}{s^2}\right).
\end{align*}

Recalling from the definition of $s$ that $s\geq d\theta$ and $s \geq \sqrt{\abs{x-y}}$
allows us to simplify this as follows:
\begin{align*}
0 &\leq
\left(\frac{1}{\pi}\sum\limits_{i=0}^M \frac{w_i}{\max\{\Dt_{\theta_i \theta_i}u(y), {\epsilon_1}\}} \right)^{-2}\\
&\leq \bO\left(\frac{\abs{x-y}^2}{\abs{x-y}} + \max\{d\theta^2,\abs{x-y}\} + \frac{\tau_{FD}(r)^2}{d\theta^2} + \frac{{\epsilon_1}^2}{d\theta^2}\right).
\end{align*}
Thus under the conditions of Theorem~\ref{thm:consistency}, we find that
\[ \lim\limits_{y\in\G\to x, h \to 0}\left(\frac{1}{\pi}\sum\limits_{i=0}^M \frac{w_i}{\max\{\Dt_{\theta_i \theta_i}u(y), {\epsilon_1}\}} \right)^{-2} = 0. \]

We also observe that
\begin{align*}
{\epsilon_2}
&\geq \min\limits_{i=0,\ldots,M}\left\{\Dt_{\theta_i\theta_i} u(y),{\epsilon_2}\right\}\\
&\geq \min\limits_{i=0,\ldots,M}\{\lambda_1(D^2u(y))\cos^2\theta_i + \lambda_2(D^2u(y))\sin^2\theta_i - \bO(\tau_{FD}(r)), {\epsilon_2}\}\\
&\geq  \min\limits_{i=0,\ldots,M}\{-\bO(\abs{x-y})\cos^2\theta_i + \lambda_2(D^2u(y))\sin^2\theta_i - \bO(\tau_{FD}(r)),{\epsilon_2}\}\\
&\geq -\bO(\abs{x-y} + \tau_{FD}(r))
\end{align*}
which implies that
\[ \lim\limits_{y\in\G\to x, h \to 0} \min\limits_{i=0,\ldots,M}\left\{\Dt_{\theta_i\theta_i} u(y),{\epsilon_2}\right\} = 0.\]

We conclude that
\[  \lim\limits_{y\in\G\to x, h \to 0}{G}^h(y,u(y),u(y)-u(\cdot)) = 0,\]
which coincides with the value of \[{\det}^+(D^2u(x)) = \lambda_1(D^2u(x)) \lambda_2(D^2u(x)) = 0.\]
\end{proof}

\subsection{Quadrature rules}
In designing a scheme of the form~\eqref{eq:quadscheme}, the choice of quadrature rule
\[ \sum\limits_{i=0}^Mw_i f(\theta_i) \approx \int_0^\pi f(\theta)\,d\theta \]
is a key factor that will influence the overall cost and accuracy.

A simple choice is the trapezoid rule, which utilizes the weights
\begin{equation}\label{eq:trap}
    w_i = \begin{cases} 
    \dfrac{\theta_0 + \pi-\theta_M}{2}, \quad & i = 0
    \\ 
    \dfrac{\theta_{i+1}-\theta_{i-1}}{2}, & i = 1,...,M-1
    \\
    \dfrac{\pi-\theta_{M-1}+\theta_0}{2}, & i = M.
    \end{cases}
\end{equation}

As required, the weights are all positive.  As required by (N8) (Definition~\ref{def:notation}), they can also be bounded from below in terms of the quasi-uniformity constant via
\[ w_i = \frac{d\theta_{i-1}+d\theta_i}{2} \geq \frac{d\theta}{Q}. \]

In general, the truncation error of the trapezoid rule is $\tau_Q(d\theta) = d\theta^2$.  However, in the special case of a uniform angular discretization ($d\theta_i = d\theta$ for all $i = 0, \ldots, M$), the trapezoid rule is spectrally accurate.  In this case, the truncation error satisfies $\tau_Q(d\theta) \leq d\theta^p$ for every $p>0$ given a $C^\infty$ integrand.

Higher-order quadrature is also possible {on non-uniform angular discretizations}.  In fact, as we will demonstrate in \autoref{sec:method}, this can be exploited in order to design approximation schemes that simultaneously improve the formal consistency error and reduce the required stencil width.

As an example, we consider Simpson's rule.  Suppose that $M+1$, the number of angles in the angular discretization, is even.  Then Simpson's rule takes the form
\begin{equation}\label{eq:simp}\begin{split}
    \int_0^\pi f(\theta)\,d\theta \approx \sum_{i = 0}^{(M-1)/2}&\frac{d\theta_{2i}+d\theta_{2i+1}}{6}\bigg{[}\bigg{(}2-\frac{d\theta_{2i+1}}{d\theta_{2i}}\bigg{)}f(\theta_{2i})\\&+\dfrac{(d\theta_{2i}+d\theta_{2i+1})^2}{d\theta_{2i}d\theta_{2i+1}}f(\theta_{2i+1})+\bigg{(}2-\dfrac{d\theta_{2i}}{d\theta_{2i+1}}\bigg{)}f(\theta_{2i+2})\bigg{]}
\end{split}\end{equation}
where we identify $\theta_{j+M+1}={\theta_j+\pi}$ and $d\theta_j = d\theta_{j+M+1}$ because of the periodicity of $f$.

Rearranging, we find that the corresponding quadrature weights are
\bq\label{eq:simpsonW}
w_j = \begin{cases}
\dfrac{(d\theta_{j-1}+d\theta_j)^3}{6d\theta_{j-1}d\theta_j}, & j \text{ odd}\\
\dfrac{d\theta_j+d\theta_{j+1}}{6}\left(2-\dfrac{d\theta_{j+1}}{d\theta_j}\right) +\dfrac{d\theta_{j-2}+d\theta_{j-1}}{6}\left(2-\dfrac{d\theta_{j-2}}{d\theta_{j-1}}\right), & j \text{ even}.
\end{cases}
\eq

The truncation error associated with Simpson's rule is $\tau_Q(d\theta) = d\theta^4$.  

However, unlike with the trapezoid rule, these quadrature weights are not automatically positive.  Instead, positivity is guaranteed only if the quasi-uniformity constant of the angular discretization is not too large.  In particular, we note that $Q<2$ is sufficient to guarantee that
\[ 2-\frac{d\theta_{j\pm1}}{d\theta_j} \geq 2 - \frac{d\theta}{d\theta_j} \geq 2-Q > 0. \]

Under the same assumption on quasi-uniformity, we use the fact that
\[ \frac{d\theta}{Q} \leq d\theta_j \leq d\theta \]
to verify that
\[ w_j \geq \min\left\{\frac{(2d\theta / Q)^3}{6d\theta^2}, 2\frac{2d\theta/Q}{6}\left(2-\frac{d\theta}{d\theta/Q}\right)\right\} = \min\left\{\frac{4}{3Q^3}, \frac{2}{3Q}\left(2-Q\right)\right\} d\theta,\]
as required by (N8) (Definition~\ref{def:notation}).

Similar results can be obtained using other higher-order quadrature schemes, which will place differing requirements on the quasi-uniformity constant $Q$ in order to ensure positivity of the weights.

\subsection{{Truncation error}}\label{sec:error}
As an immediate consequence of the consistency proofs (in particular, Lemma~\ref{lem:consistentPositive}), we obtain the formal truncation error of the scheme as points where the function is ``locally'' strictly convex.  This will be used to inform and optimize the particular implementations of this method in \autoref{sec:method}.

\begin{corollary}[Truncation error]\label{cor:truncation}
Under the assumptions of Theorem~\ref{thm:consistency}, let $u\in C^{2,1}(\Omega)$ and consider  $x\in\Omega$ such that $\lambda_1(D^2u(x)) > 0$.  Then there exists a constant $C>0$ such that for all sufficiently small $h>0$,
\[ \abs{{G}^h(x,u(x),u(x)-u(\cdot)) + {\det}^+(D^2u(x))} \leq C\left(\tau_Q(d\theta) + \tau_{FD}(r) + {\epsilon_2}\right). \]
\end{corollary}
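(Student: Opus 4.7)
The plan is to essentially read off the truncation error from the estimates already carried out in the proof of Lemma~\ref{lem:consistentPositive}, being careful to track constants rather than simply sending quantities to zero. The only new ingredient is making sure all implicit constants can be bounded uniformly using the strict positivity of $\lambda_1(D^2u(x))$ and the $C^{2,1}$ regularity of $u$.

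First I would fix $x$ with $\lambda_1(D^2u(x)) > 0$ and set $\mu = \tfrac{1}{2}\lambda_1(D^2u(x)) > 0$. By continuity of eigenvalues and the $C^{2,1}$ regularity of $u$, all second directional derivatives $u_{\theta\theta}(x)$ are bounded below by $2\mu$. For $h$ sufficiently small, the smallness assumptions on $\epsilon$ and $\tau_{FD}(r)$ guarantee that $\Dt_{\theta_i\theta_i}u(x) \geq u_{\theta_i\theta_i}(x) - \tau_{FD}(r) \geq \mu > \epsilon$ uniformly in $i$, so the $\max$ inside the quadrature sum reduces to $\Dt_{\theta_i\theta_i}u(x)$, and the $\min$ in the second summand reduces to $\epsilon$, contributing error $\bO(\epsilon)$.

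Next, I would expand the reciprocal via
\[
\frac{1}{\Dt_{\theta_i\theta_i}u(x)} = \frac{1}{u_{\theta_i\theta_i}(x)} + \bO(\tau_{FD}(r))
\]
with constants controlled by $1/\mu^2$. Summing against the weights $w_i$ (which sum to $\pi$) gives an additive error of $\bO(\tau_{FD}(r))$. The remaining discrete sum $\tfrac{1}{\pi}\sum w_i/u_{\theta_i\theta_i}(x)$ is quadrature applied to the smooth, strictly positive, $\pi$-periodic function $\theta \mapsto 1/u_{\theta\theta}(x)$; by definition of $\tau_Q(d\theta)$ this equals $\tfrac{1}{\pi}\int_0^\pi d\theta/u_{\theta\theta}(x) + \bO(\tau_Q(d\theta))$, with constant depending on the $C^\infty$-norm of $1/u_{\theta\theta}(x)$, which is again controlled by $\mu$ and $\|u\|_{C^{2,1}}$.

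Finally, since the integral quantity inverse-squared equals $\det(D^2u(x)) = \det^+(D^2u(x))$ by Theorem~\ref{thrm:integral}, and since the map $s \mapsto s^{-2}$ is Lipschitz on any interval bounded away from zero (which is the case here because the integral is bounded above by $\pi/\mu$ plus small errors), the combined error propagates linearly, yielding the claimed bound $\bO(\tau_Q(d\theta) + \tau_{FD}(r) + \epsilon)$. The only delicate point is ensuring the Lipschitz constant of $s \mapsto s^{-2}$ does not blow up — this is where strict positivity of $\lambda_1(D^2u(x))$ is essential, but otherwise the argument is bookkeeping rather than a genuine obstacle.
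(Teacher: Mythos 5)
Your argument is correct and is essentially the paper's: the corollary is presented there as an immediate consequence of the chain of estimates in the proof of Lemma~\ref{lem:consistentPositive}, which already establishes $F^h = -\det(D^2u(y)) + \bO(\tau_Q(d\theta)+\tau_{FD}(r)+\epsilon)$ at points where the Hessian is positive definite, and your constant-tracking simply makes those same steps quantitative. One small slip worth fixing: the Lipschitz bound on $s \mapsto s^{-2}$ requires the quadrature sum to be bounded \emph{below} away from zero, not above; this follows from $u_{\theta\theta}(x) \leq \lambda_2(D^2u(x))$ together with $\sum_i w_i = \pi$, which gives the sum a positive lower bound of order $1/\lambda_2(D^2u(x))$.
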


We are also interested in approximating functions that are convex, but not necessarily strictly convex.  In this case, the integrand in~\eqref{eq:IntForm} is singular and we cannot directly use the formal truncation error $\tau_Q(d\theta)$ of the quadrature rule.  However, we can easily bound the resulting sums directly in the case where at least one eigenvalue $\lambda_1(D^2u(x))$ is known to vanish.  We consider two separate cases: (1) the fully degenerate case ($\lambda_1(D^2u(x)) = \lambda_2(D^2u(x)) = 0$) and (2) the semi-degenerate case ($\lambda_1(D^2u(x)) = 0 < \lambda_2(D^2u(x))$).

\begin{lemma}[Truncation error (fully degenerate)]\label{lem:truncationDeg}
Under the assumptions of Theorem~\ref{thm:consistency}, let $u\in C^{2,1}(\Omega)$ and consider  $x\in\Omega$ such that $\lambda_1(D^2u(x)) = \lambda_2(D^2u(x)) =  0$.  Then there exists a constant $C>0$ such that for all sufficiently small $h>0$,
\[ \abs{{G}^h(x,u(x),u(x)-u(\cdot)) + {\det}^+(D^2u(x))} \leq C\left(\tau_{FD}(r)^2 + {\epsilon_1}\right). \]
\end{lemma}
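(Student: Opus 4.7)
\emph{Proof plan.} The plan is to specialize the argument of Lemma~\ref{lem:consistentZero} to the case $y = x$, where the distance-dependent contributions drop out and we are left with a purely truncation-type estimate. Since the fully degenerate hypothesis $\lambda_1(D^2u(x)) = \lambda_2(D^2u(x)) = 0$ forces $D^2u(x) = 0$, we have ${\det}^+(D^2u(x)) = 0$ and the target reduces to bounding $|F^h(x,u(x),u(x)-u(\cdot))|$ directly. A central observation used throughout is that $u_{\theta\theta}(x) = 0$ for every $\theta$, which together with (N13) gives $|\Dt_{\theta_i\theta_i}u(x)| \leq C\tau_{FD}(r)$ uniformly over stencil angles.

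First I would handle the reciprocal-squared quadrature term. The uniform pointwise bound gives $\max\{\Dt_{\theta_i\theta_i}u(x), \epsilon\} \leq C\tau_{FD}(r) + \epsilon$, and since $\sum_i w_i = \pi$, the reciprocal quadrature sum admits the lower bound
\[
\frac{1}{\pi}\sum_{i=0}^M \frac{w_i}{\max\{\Dt_{\theta_i\theta_i}u(x),\epsilon\}} \geq \frac{1}{C\tau_{FD}(r)+\epsilon}.
\]
Squaring the inverse produces a contribution of at most $(C\tau_{FD}(r)+\epsilon)^2$. Absorbing the cross term via AM-GM and using $\epsilon^2 \leq \epsilon$ for small $\epsilon$ then bounds this part of $F^h$ by $C'(\tau_{FD}(r)^2 + \epsilon)$, as required.

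Next I would turn to the min regularization term $\min_i\{\Dt_{\theta_i\theta_i}u(x), \epsilon\}$. The pointwise control places this quantity in the interval $[-C\tau_{FD}(r),\epsilon]$ a priori. The hard part will be sharpening this to order $O(\tau_{FD}(r)^2 + \epsilon)$ rather than the naive $O(\tau_{FD}(r) + \epsilon)$ that the above estimates immediately give. To close this gap I would exploit the extra cancellation available at $x$: with $D^2u(x) = 0$, Taylor expansion yields $u(x_j) - u(x) = \nabla u(x)\cdot(x_j-x) + \bO(|x_j - x|^3)$, and the linear piece is annihilated by the consistency relation $\sum_j a_j(\theta)(x_j - x) = 0$ that the SDD approximation~\eqref{eq:sdd} satisfies. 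Combining this cubic remainder with the $|a_j(\theta)| = \bO(r^{-2})$ scaling of a second-derivative stencil of width $r$ should pinch $\Dt_{\theta_i\theta_i}u(x)$ into a tighter interval than the abstract truncation bound suggests. Propagating the improved estimate through the min--$\epsilon$ cap and summing with the reciprocal-squared bound from the previous step then delivers the desired $C(\tau_{FD}(r)^2 + \epsilon)$ bound on $|F^h|$. The obstacle is that this last refinement is inherently scheme-dependent---it relies on cancellations beyond what (N12)--(N13) encode abstractly---so careful bookkeeping of the Lipschitz constant of $D^2u$, the stencil width $r$, and the coefficient structure $a_j(\theta)$ will be essential.
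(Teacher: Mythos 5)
The paper does not actually print a proof of this lemma (it is stated as following ``directly'' from bounding the sums), so I am comparing your plan against the natural argument the authors intend. Your treatment of the reciprocal-squared quadrature term is correct and is surely the intended one: since $D^2u(x)=0$ forces $u_{\theta\theta}(x)=0$ for every $\theta$, each $\max\{\Dt_{\theta_i\theta_i}u(x),\epsilon\}$ is at most $C\tau_{FD}(r)+\epsilon$, the weighted reciprocal sum is at least $\pi/(C\tau_{FD}(r)+\epsilon)$ because $\sum_i w_i=\pi$, and squaring the inverse gives $(C\tau_{FD}(r)+\epsilon)^2\leq C'(\tau_{FD}(r)^2+\epsilon)$ for small $\epsilon$.

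The gap is in the min term, and the repair you sketch cannot close it. The cancellation you invoke --- linear terms annihilated by $\sum_j a_j(\theta)(x_j-x)=0$, a cubic Taylor remainder, and coefficients $a_j=\bO(r^{-2})$ --- yields only $\abs{\Dt_{\theta_i\theta_i}u(x)}\leq \bO(r^{-2})\cdot\bO(r^{3})=\bO(r)$, which is exactly the naive $\bO(\tau_{FD}(r))$ bound you are trying to beat; $C^{2,1}$ regularity alone offers no further cancellation. Indeed, the sharpened bound is false without an extra hypothesis: for $u(\mathbf{x})=-\abs{\mathbf{x}}^{3}$, which is $C^{2,1}$ with $D^2u(0)=0$, a centered stencil of radius $\rho$ gives $\Dt_{\theta\theta}u(0)=-2\rho$ in every direction, so the min term equals $-2\rho=\bO(r)$, which is not $\bO(\tau_{FD}(r)^2+\epsilon)$. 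The missing ingredient is the convexity of $u$, implicit in the surrounding discussion (``functions that are convex, but not necessarily strictly convex''): for convex $u$ the discrete second directional derivative~\eqref{eq:sdd} is a positive multiple of the difference of two one-dimensional difference quotients, which are non-decreasing along a line for a convex function, hence $\Dt_{\theta_i\theta_i}u(x)\geq 0$ for every $i$ and the min term is trapped in $[0,\epsilon]$, contributing only $\bO(\epsilon)$. With that observation, your first step already completes the proof; without it, no amount of Taylor bookkeeping will.
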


\begin{lemma}[Truncation error (semi-degenerate)]\label{lem:truncationSemiDeg}
Under the assumptions of Theorem~\ref{thm:consistency}, let $u\in C^{2,1}(\Omega)$ and consider  $x\in\Omega$ such that $\lambda_1(D^2u(x)) = 0 < \lambda_2(D^2u(x))$.  Then there exists a constant $C>0$ such that for all sufficiently small $h>0$,
\[ \abs{{G}^h(x,u(x),u(x)-u(\cdot)) + {\det}^+(D^2u(x))} \leq C\left(d\theta^2 + \frac{\tau_{FD}(r)^2}{d\theta^2} + \frac{{\epsilon_1}^2}{d\theta^2} + {\epsilon_2}\right). \]
\end{lemma}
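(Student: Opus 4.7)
The plan is to follow the structure of Lemma~\ref{lem:consistentZero}, but now evaluated at $x$ itself (so $\abs{x-y}=0$) and using the extra information that $\lambda_2(D^2u(x))>0$. The target quantity is $\det{}^+(D^2u(x)) = \lambda_1\lambda_2 = 0$, so I need to bound $\abs{F^h(x,u(x),u(x)-u(\cdot))}$ directly.

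First I would rotate coordinates so that the eigenvector associated with $\lambda_1(D^2u(x))=0$ is $(1,0)$; then $u_{\theta\theta}(x) = \lambda_2\sin^2\theta$, which is small precisely for $\theta$ near $0$ and $\pi$. This is the key structural fact that separates the semi-degenerate case from the fully degenerate case of Lemma~\ref{lem:truncationDeg}: the integrand in~\eqref{eq:IntForm} is singular only near $\theta=0,\pi$, and the width of that singular region scales like $d\theta$.

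Next I would estimate the two pieces of $F^h$ separately. For the reciprocal-sum piece, the strategy of Lemma~\ref{lem:consistentZero} specializes cleanly with $s=d\theta$: the angle $\theta_0\in[0,d\theta]$ satisfies $u_{\theta_0\theta_0}(x) = \lambda_2\sin^2\theta_0 = \bO(d\theta^2)$, so
\[ \max\{\Dt_{\theta_0\theta_0}u(x),\epsilon\} \leq \bO(d\theta^2 + \tau_{FD}(r) + \epsilon), \]
and retaining only the $i=0$ term in the sum together with the lower bound $w_0\geq c\,d\theta$ yields
\[ \left(\frac{1}{\pi}\sum_{i=0}^M \frac{w_i}{\max\{\Dt_{\theta_i\theta_i}u(x),\epsilon\}}\right)^{-2} \leq \bO\!\left(\frac{(d\theta^2 + \tau_{FD}(r) + \epsilon)^2}{d\theta^2}\right) = \bO\!\left(d\theta^2 + \frac{\tau_{FD}(r)^2}{d\theta^2} + \frac{\epsilon^2}{d\theta^2}\right). \]
For the $\min$ piece, the upper bound $\epsilon$ is immediate, while the lower bound comes from $\Dt_{\theta_i\theta_i}u(x) \geq \lambda_2\sin^2\theta_i - \bO(\tau_{FD}(r)) \geq -\bO(\tau_{FD}(r))$ uniformly in $i$, giving $\abs{\min_i\{\Dt_{\theta_i\theta_i}u(x),\epsilon\}} \leq \epsilon + \bO(\tau_{FD}(r))$.

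Finally, the stray linear $\tau_{FD}(r)$ term left over from the min must be absorbed into the desired bound. I would do this with AM-GM: since $2\tau_{FD}(r) \leq \tau_{FD}(r)^2/d\theta^2 + d\theta^2$, the term is already controlled by the quantities on the right-hand side of the claimed estimate. Combining these pieces gives exactly the stated bound. The main obstacle I anticipate is simply the bookkeeping of the various error sources and the AM-GM absorption step; the underlying mechanism (the $\sin^2\theta$ profile of $u_{\theta\theta}$ near the degenerate direction) is the same one already exploited in Lemma~\ref{lem:consistentZero}.
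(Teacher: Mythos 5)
Your proposal is correct: the paper states Lemma~\ref{lem:truncationSemiDeg} without proof, and your argument is precisely the intended specialization of the proof of Lemma~\ref{lem:consistentZero} to $y=x$ and $s=d\theta$, with the single-term lower bound on the quadrature sum, the $\lambda_2\sin^2\theta=\bO(d\theta^2)$ estimate near the degenerate direction, and the $\min$ term handled exactly as in the paper's neighboring lemmas. The AM--GM absorption of the leftover $\tau_{FD}(r)$ term into $\tau_{FD}(r)^2/d\theta^2 + d\theta^2$ is a valid and necessary finishing step that makes the stated bound come out exactly.
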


{
Finally, we observe that with appropriate symmetry in the discretization, our quadrature-based schemes can sometimes result in even better formal consistency error than that predicted by Corollary~\ref{cor:truncation}.  This observation motivates one of the implementations (on hexagonal grids) that will be introduced in section~\ref{sec:method}.

We consider the special case of applying the trapezoid rule using equally spaced angles ($d\theta_i = d\theta$ for all $i=0, \ldots, M$), which is spectrally accurate as discussed previously.  We suppose that we use grid-aligned differences, which may be centered or uncentered, to discretize the finite difference operators.  That is, the error in~\eqref{eq:sdd} takes the form
\[ \Dt_{\theta_i\theta_i}u = u_{\theta_i\theta_i} + \frac{1}{3}u_{\theta_i\theta_i\theta_i}(r(\theta_i)-r(\theta_{i+\pi})) + \bO(r^2). \]
where $\abs{r(\theta_i)} \leq r$ for all $i=0,\ldots,M$.  Notice that by symmetry and periodicity, we have that
\[ u_{\theta+\pi,\theta+\pi} = u_{\theta\theta}, \quad u_{\theta+\pi,\theta+\pi,\theta+\pi} = -u_{\theta\theta\theta}, \quad r(\theta+2\pi) = r(\theta). \]
For ease of notation, we extend the indexing such that $\theta_{i+M+1} = \theta_i + \pi$ for $i=0, \ldots, M$.

\begin{figure}
    \centering
    \subfigure[]{
    \includegraphics[width=0.32\textwidth]{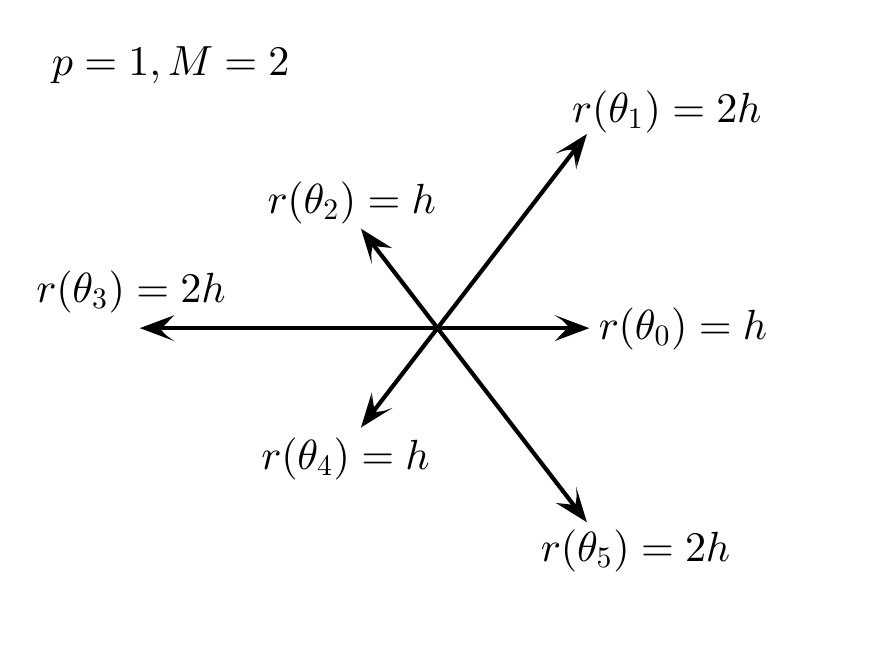}
		\includegraphics[width=0.32\textwidth]{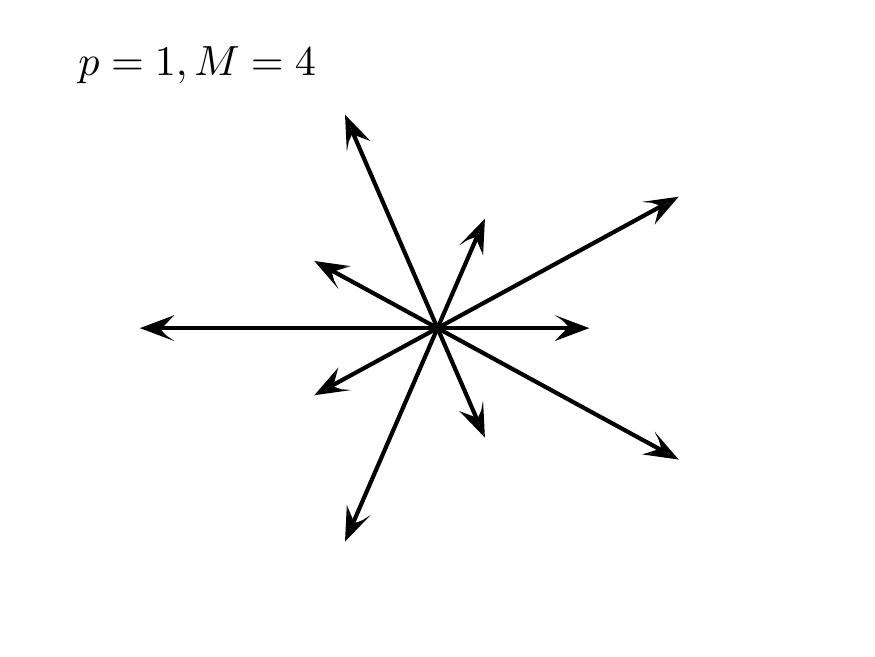}
		\includegraphics[width=0.32\textwidth]{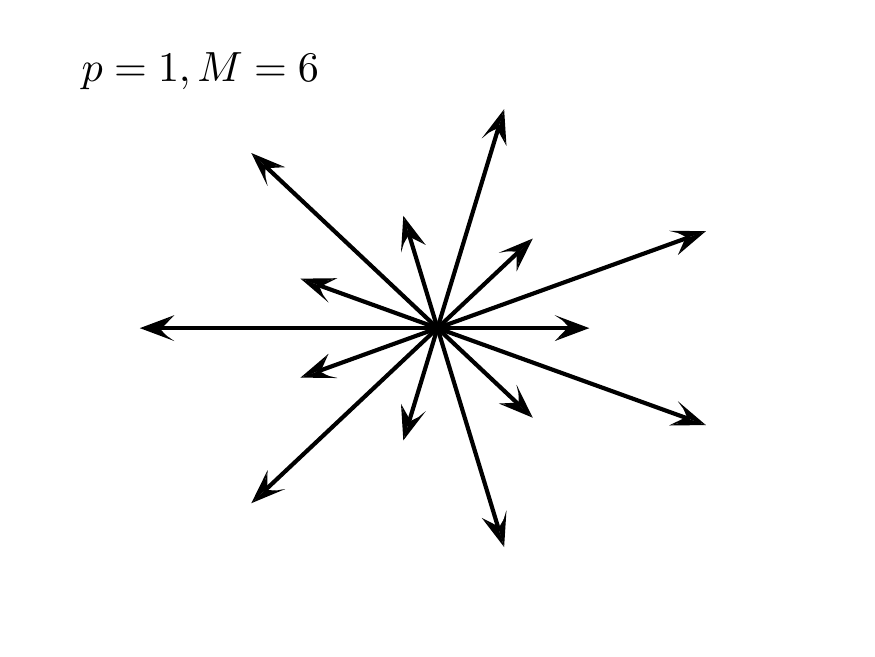}\label{fig:stencilp1}}
    \subfigure[]{
    \includegraphics[width=0.32\textwidth]{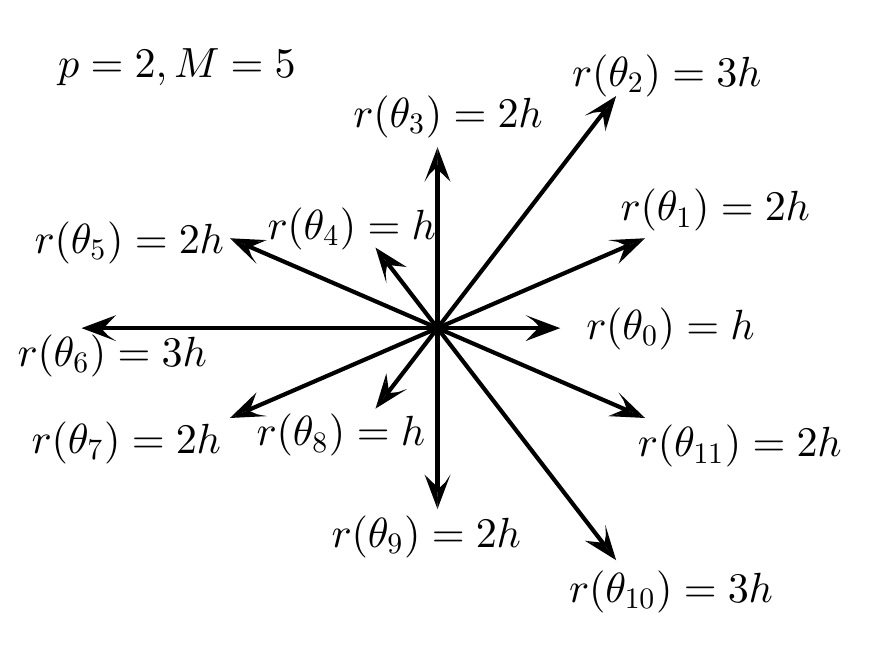}
		\includegraphics[width=0.32\textwidth]{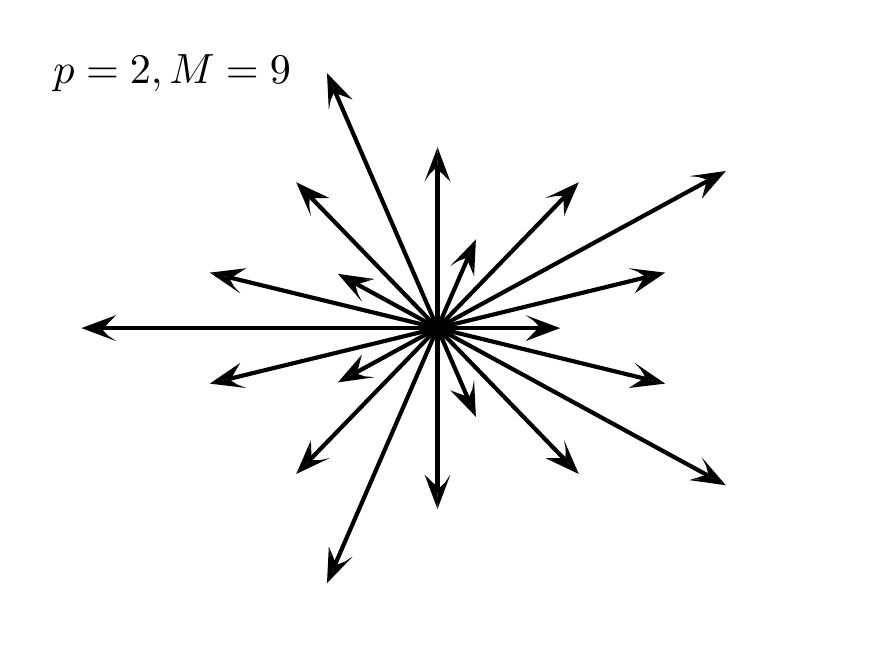}
		\includegraphics[width=0.32\textwidth]{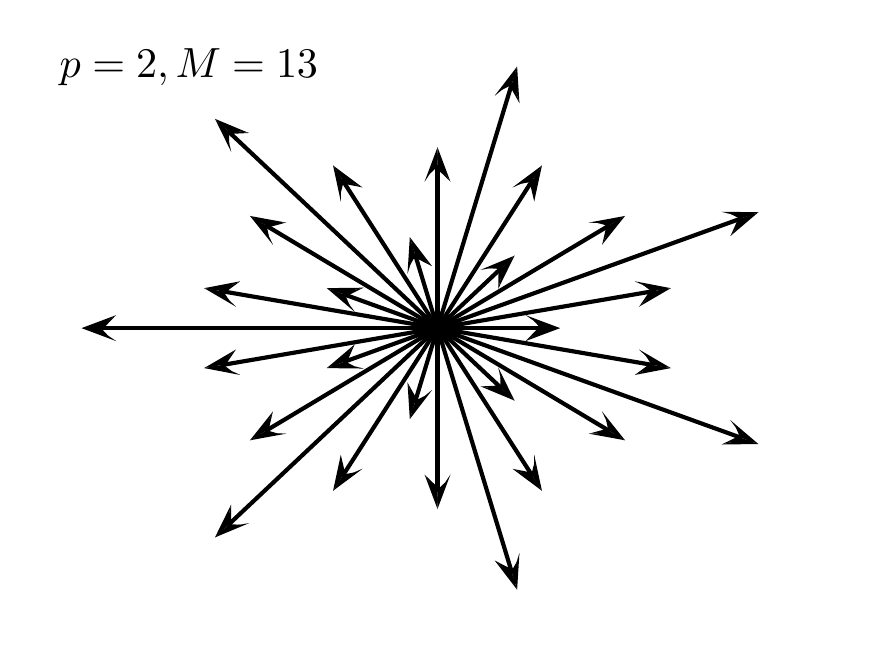}\label{fig:stencilp2}}
    \caption{{Examples of sequences of stencils corresponding to \subref{fig:stencilp1}~$p=1$ and \subref{fig:stencilp2}~$p=2$.}}
    \label{fig:stencilsymmetry}
\end{figure}

Now we assume there is sufficient symmetry in the grids $\G^h$ so that, for some fixed $p\in\mathbb{N}$ and every $h>0$, we have $r(\theta_{i+p})-r(\theta_{i+p}+\pi) = -(r(\theta_i)-r(\theta_{i}+\pi))$.  Note that this condition requires that $M+1$, the number of terms in the angular discretization of $[0,\pi)$, is an odd multiple of $p$ for each grid $\G^h$ and corresponding stencil.  See Figure~\ref{fig:stencilsymmetry}. We argue that if $\lambda_1(D^2u)>0$, we can expect the formal truncation error of~\eqref{eq:quadscheme} to be $\bO(\tau_Q(d\theta) + r^2 + \epsilon_2)$ despite the fact that the underlying finite difference approximations have only $\bO(r)$ accuracy.

We first note that under this symmetry condition, there are at most $p$ possible values that $\abs{r(\theta_{i})-r(\theta_{i}+\pi)}$ can take.  To access these, we rewrite the indices $i = 0, \ldots, 2M+1$ in the form $i = kp+j$ where $j=0,\ldots,p-1$ and $k=0,\ldots,(2M+2)/p-1$.  Then by $p$-periodicity, we find that
\[ \abs{r(\theta_{kp+j})-r(\theta_{kp+j}+\pi)} = \abs{r(\theta_{j})-r(\theta_{j}+\pi)}. \]
Because the sign alternates every $p$ steps, we can further characterize
\[ r(\theta_{kp+j})-r(\theta_{kp+j}+\pi) = (-1)^k(r(\theta_{j})-r(\theta_{j}+\pi)), \]
which can take at most $2p$ distinct values.

In the setting $\lambda_1(D^2u)>0$ (so that all $u_{\theta\theta}\geq\lambda_1(D^2u)>0$), we have that for sufficiently small $r, \epsilon > 0$, the summation appearing in~\eqref{eq:quadscheme} can be expressed as
\begin{align*}
\sum\limits_{i=0}^M &\frac{w_i}{\max\{\Dt_{\theta_i\theta_i}u,\epsilon_1\}} 
  = \frac{\pi}{M+1} \sum\limits_{i=0}^M \frac{1}{u_{\theta_i\theta_i} + \frac{1}{3}u_{\theta_i\theta_i\theta_i}(r(\theta_i)-r(\theta_{i+\pi})) + \bO(r^2)}\\
	&=\frac{\pi}{M+1}\sum\limits_{i=0}^M\frac{1}{u_{\theta_i\theta_i}} - \frac{\pi}{3(M+1)}\sum\limits_{i=0}^M \frac{u_{\theta_i\theta_i\theta_i}(r(\theta_i)-r(\theta_i+\pi))}{u_{\theta_i\theta_i}} + \bO(r^2),
\end{align*}
where the last line here follows from a binomial expansion.

Now we exploit symmetry and the fact that $1/u_{\theta\theta}$ is smooth (since $\lambda_1(D^2u)>0$) to re-express this as
\begin{align*}
\sum\limits_{i=0}^M &\frac{w_i}{\max\{\Dt_{\theta_i\theta_i}u,\epsilon_1\}} \\
&= \int_0^\pi 
	\frac{1}{u_{\theta\theta}}d\theta + \bO(\tau_Q(d\theta) + r^2) - \frac{\pi}{6(M+1)} \sum\limits_{i=0}^{M}\left(\frac{u_{\theta_i\theta_i\theta_i}(r(\theta_i)-r(\theta_i+\pi))}{u_{\theta_i\theta_i}}  \right.\\
	&\phantom{===================}\left. + \frac{-u_{\theta_i+\pi,\theta_i+\pi,\theta_i+\pi}(r(\theta_i+2\pi)-r(\theta_i+\pi))}{u_{\theta_i+\pi,\theta_i+\pi}}\right)\\
  &= \int_0^\pi 
	\frac{1}{u_{\theta\theta}}d\theta + \bO(\tau_Q(d\theta)+r^2) - \frac{\pi}{6(M+1)} \sum\limits_{i=0}^{2M+1} 
	\frac{u_{\theta_i\theta_i\theta_i}(r(\theta_i)-r(\theta_i+\pi))}{u_{\theta_i\theta_i}}.
\end{align*}

Next, we utilize the periodicity of the terms $r(\theta_i)-r(\theta_i+\pi)$ with respect to shifts of $2p$ in the index.  This allows us to rewrite the sum as
\begin{align*}
\sum\limits_{i=0}^M &\frac{w_i}{\max\{\Dt_{\theta_i\theta_i}u,\epsilon_1\}} \\
  &= \int_0^\pi 
	\frac{1}{u_{\theta\theta}}d\theta + \bO(\tau_Q(d\theta)+r^2) \\
	&\phantom{===}- \frac{\pi}{6(M+1)}\sum\limits_{j=0}^{p-1}\sum\limits_{k=0}^{(2M+2)/p-1} \frac{u_{\theta_{pk+j}\theta_{pk+j}\theta_{pk+j}}(-1)^k(r(\theta_j)-r(\theta_j+\pi))}{u_{\theta_{pk+j}\theta_{pk+j}}} \\
	&= \int_0^\pi 
	\frac{1}{u_{\theta\theta}}d\theta + \bO(\tau_Q(d\theta)+r^2)\\
	&\phantom{===}- \frac{1}{6p}\sum\limits_{j=0}^{p-1} (r(\theta_j)-r(\theta_j+\pi))
	\left(\frac{\pi p}{M+1}\sum\limits_{k \text{ even}} \frac{u_{\theta_{pk+j}\theta_{pk+j}\theta_{pk+j}}}{u_{\theta_{pk+j}\theta_{pk+j}}}\right.\\
	&\phantom{=========================}\left.- \frac{\pi p}{M+1}\sum\limits_{k \text{ odd}} \frac{u_{\theta_{pk+j}\theta_{pk+j}\theta_{pk+j}}}{u_{\theta_{pk+j}\theta_{pk+j}}}\right). 
\end{align*}

Now we notice that each of the sums in the last line can be interpreted as the trapezoid rule applied to integral
\[ \int_0^{2\pi}\frac{u_{\theta\theta\theta}}{u_{\theta\theta}}d\theta=0 \]
 using equally spaced angles with $d\tilde{\theta} = 2p\,d\theta$.  Thus formally, we expect that
\[ \sum\limits_{i=0}^M \frac{w_i}{\max\{\Dt_{\theta_i\theta_i}u,\epsilon_1\}} 
  = \int_0^\pi 
	\frac{1}{u_{\theta\theta}}d\theta + \bO(\tau_Q(d\theta)+r^2 + r\,\tau_Q(d\theta)). \]

Substituting this into the quadrature scheme~\eqref{eq:quadscheme} for the \MA equation, we obtain an expected consistency error of $\bO(\tau_Q(d\theta) + r^2 + \epsilon_2)$, which is better than the truncation error predicted by Corollary~\ref{cor:truncation}.
}

\section{Implementation}\label{sec:method}
We now present {three} specific implementations of a quadrature scheme based upon the formulation of~\eqref{eq:quadscheme}.  

{The first two implementations rely on a hexagonal and triangular tiling of the domain, respectively.}  The underlying structure of the grid allows us to design an angular discretization of $[0,2\pi]$ involving twelve equi-spaced angles.  The use of the trapezoid rule then leads to a compact finite difference stencil that in practice achieves spectral {accuracy} in the angular parameter $d\theta$ {(which is held fixed)}.

The {third} implementation relies on a simple Cartesian grid combined with Simpson's Rule for quadrature.  As required by the convergence analysis (Theorem~\ref{thm:monotonicity}-\ref{thm:consistency}), the stencil does grow wider as the grid is refined.  However, the optimal stencil width is asymptotically narrower than that required by existing monotone schemes, while simultaneously improving the order of the formal consistency error~\cite{FroeseMeshfreeEigs}.

\subsection{Discretization of domain}
The implementations we describe rely on a discretization of the domain that consists of two components: (1) a structured mesh restricted to the interior of the domain and (2) a list of boundary points chosen to preserve the desired angular resolution.  Hand-in-hand with this grid, we include the list of angles $\theta_j$ used to discretize the integral in~\eqref{eq:IntForm}.

We begin by presenting an algorithm for discretizing the domain, which applies to {all} types of mesh structure (Hexagonal, {Triangular}, and Cartesian) considered in this work.  In the subsequent subsections, we will fill in the remaining details about each specific implementation of the quadrature scheme.

As a starting point, suppose we are given a structured mesh $\mathcal{M}$ that tiles $\R^2$ and a set of angles $0 \leq \theta_0 < \ldots < \theta_M < \pi$.  Moreover, the angles are chosen such that for every $x\in\mathcal{M}$ and $j = 0, \ldots, M$, we have
\[ x \pm r^\pm_j(x)(\cos\theta_j,\sin\theta_j) \in \mathcal{M} \]
for some $r^\pm_j(x)>0$.  That is, we are able to identify neighboring grid points aligned with all the directions in our angular discretization.  To this underlying grid, we associate a stencil width defined by
\[ r = \max\{r^\pm_j(x) \mid x\in\mathcal{M}; j = 0, \ldots, M\}. \]

From this tiling of $\R^2$, we generate a set of discretization points $\G$ by (1) including all mesh points lying in the interior of the domain $\Omega$ and (2) supplementing with points in $\partial\Omega$ in order to preserve the existence of grid points perfectly aligned with the given set of angles.  That is, given any interior node $x\in\G\cap\Omega$  and $j = 0, \ldots, M$, we have
\[ x \pm r^\pm_j(x)(\cos\theta_j,\sin\theta_j) \in \mathcal{G} \]
for some $r^\pm_j(x)>0$.

As an example, consider the case where the domain $\Omega$ is a square, $\mathcal{M}$ is {either a hexagonal or triangular} tiling of $\R^2$, and the desired angles are $\theta_j = \frac{j\pi}{6}$, for $j = 0, \ldots, 5$.  The resulting {meshes are} pictured in Figure~\ref{fig:hexGrid}.  An example involving an underlying Cartesian grid is shown in Figure~\ref{fig:L1angles}.

\begin{figure}
    \centering
    \subfigure[]{
    \includegraphics[height=0.3\textwidth]{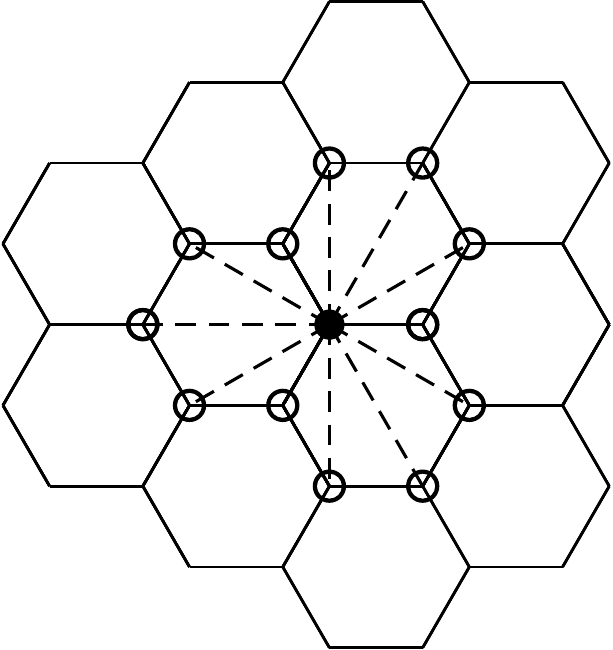}\label{fig:HexStencil}}
    \subfigure[]{\includegraphics[height=0.3\textwidth]{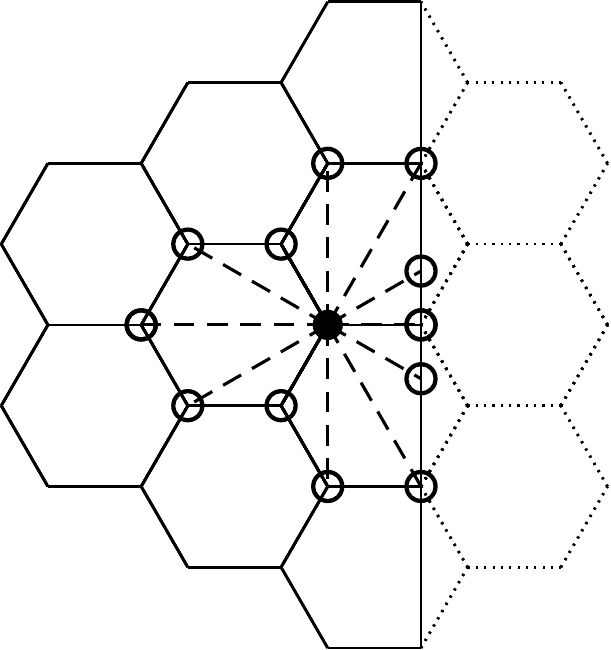}\label{fig:hexBdy}}
    \subfigure[]{\includegraphics[height=0.3\textwidth]{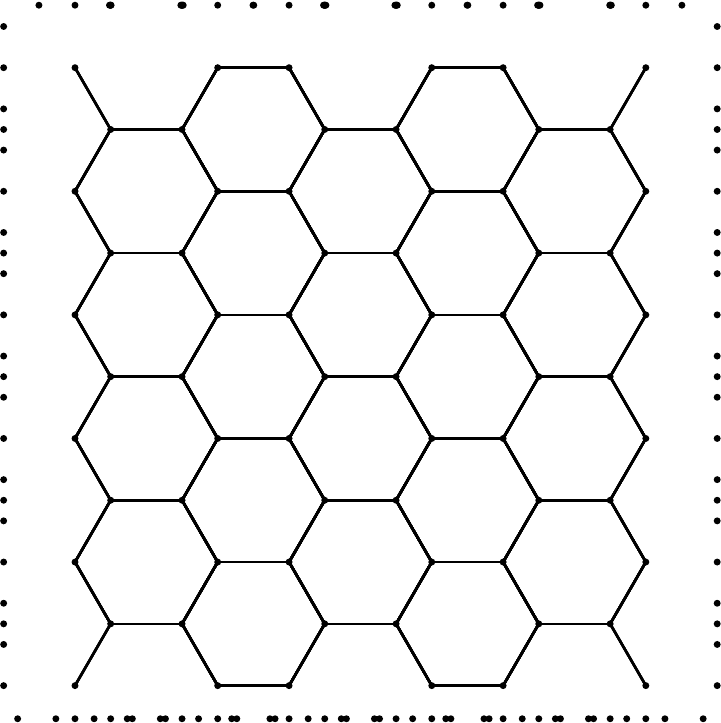}\label{fig:hexMesh}}
		    \subfigure[]{
    \includegraphics[height=0.3\textwidth]{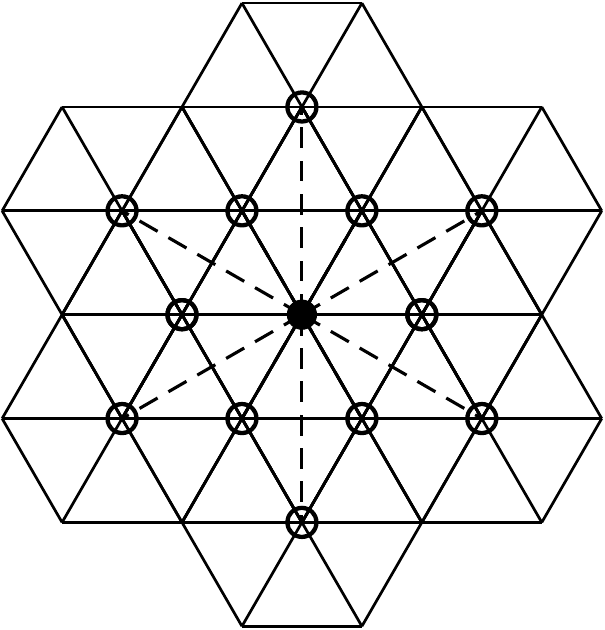}\label{fig:TriStencil}}
    \subfigure[]{\includegraphics[height=0.3\textwidth]{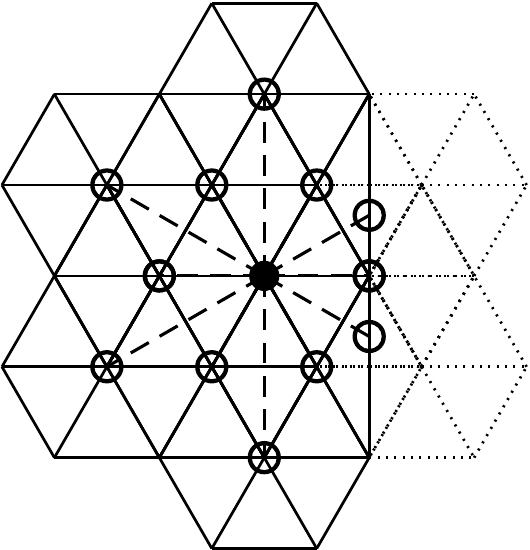}\label{fig:triBdy}}
    \subfigure[]{\includegraphics[trim={2.75cm 1.4cm 2.75cm 1.15cm},clip,height=0.3\textwidth]{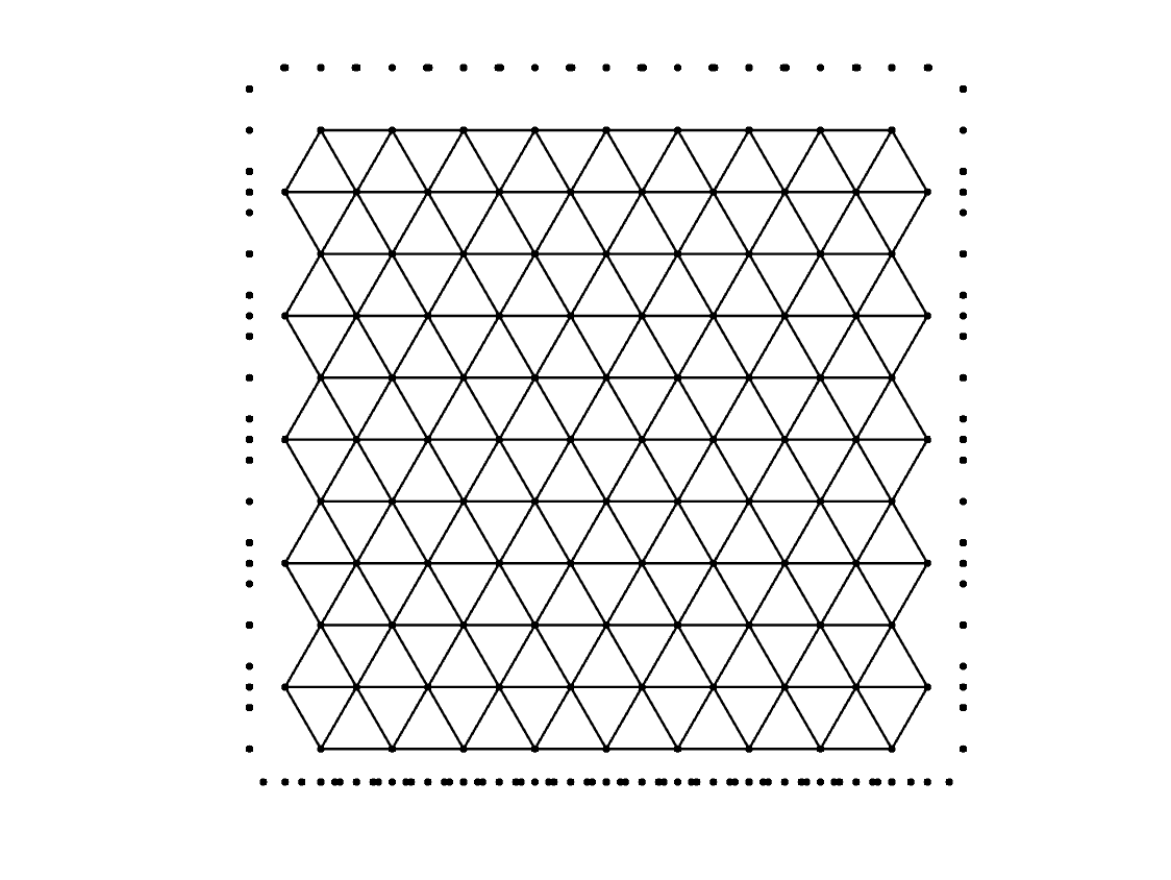}\label{fig:triMesh}}
    \caption{\subref{fig:HexStencil}~A hexagonal mesh, discrete set of angles, and neighboring mesh points aligned with those angles. \subref{fig:hexBdy}~An example of boundary points that are added to the grid. \subref{fig:hexMesh}~Meshing of a square domain using a hexagonal mesh augmented with boundary points. {\subref{fig:TriStencil}~A triangular mesh, discrete set of angles, and neighboring mesh points aligned with those angles. \subref{fig:triBdy}~An example of boundary points that are added to the grid. \subref{fig:triMesh}~Meshing of a square domain using a triangular mesh augmented with boundary points.}}
    \label{fig:hexGrid}
\end{figure}

The meshing of the domain can be easily accomplished if the domain $\Omega$ is represented through the signed distance function $d_{\partial\Omega}(x)$ to its boundary $\partial\Omega$. See Algorithm~\ref{alg:meshing}.

\begin{algorithm}[h]
\caption{Discretization of the domain $\Omega$}
\label{alg:meshing}
\begin{algorithmic}[1]
\State $\G \leftarrow \{x\in\mathcal{M} \mid d_{\partial\Omega}(x) < 0\}$
\For{$x\in\G$ such that $0 < \abs{d_{\partial\Omega}(x)}<r$}
\For{$j = 0, \ldots, M$}
\State $r^\pm \leftarrow \min\{h>0 \mid x \pm h(\cos\theta_j,\sin\theta_j) \in \mathcal{M}\}$
\If{$r^\pm(\cos\theta_j,\sin\theta_j) \notin\G$}
\State $t^\pm \leftarrow \text{Positive solution of}$ \[ d_{\partial\Omega}(x \pm t(\cos\theta_j,\sin\theta_j)) = 0\]  
\State $\G \leftarrow \G\cup \{x \pm t^\pm(\cos\theta_j,\sin\theta_j)\}$
\EndIf
\EndFor
\EndFor
\end{algorithmic}
\end{algorithm}


\subsection{Implementation on hexagonal {or triangular} grids}\label{sec:hex}
The first implementation we suggest is motivated by a desire to exploit the spectral accuracy of the trapezoid rule when applied to a uniform discretization of the angles ($d\theta_j=d\theta$ for all $j = 0, \ldots, M$).

In order to obtain as many equi-spaced angles as possible, we propose to let the underlying mesh $\mathcal{M}$ be a tiling of $\R^2$ with {either regular hexagons or equilateral triangles}.  Then we can achieve a grid-aligned scheme using the uniform angular discretization $\theta_j = \frac{j\pi}{6}$ for $j = 0, \ldots, 5$.  See Figure~\ref{fig:hexGrid}.

The resulting angular resolution is $d\theta = \frac{\pi}{6}$.  Applying the trapezoid rule~\eqref{eq:trap}, we obtain equal quadrature weights $w_i = \frac{\pi}{6}$.  

We notice that fixing $d\theta$ also has the effect of fixing the stencil width $r = \bO(h)$.  We recall that second directional derivatives in the directions $\nu = (\cos\theta, \sin\theta)$ are discretized by~\eqref{eq:sdd} as follows:
 \begin{align*}
    \Dt_{\nu\nu} u(x) &= 2\frac{h^-u(x+h^+\nu)+h^+u(x-h^-\nu)-(h^++h^-)u(x)}{h^+h^-(h^++h^-)}.
\end{align*} 
In general, {the use of narrow stencils on a hexagonal mesh leads to} stencils that are aligned but  not necessarily centered ($h^+\neq h^-$); see Figure~\ref{fig:HexStencil}. Thus the truncation error of these finite differences satisfies $\tau_{FD}(r) = h$.  {An improvement to centered stencils with $\tau_{FD}(r) = h^2$ is possible by allowing each stencil to extend across the width of two hexagons.  However, we also note that the compact stencils illustrated in Figure~\ref{fig:HexStencil} satisfy the symmetry condition discussed in \autoref{sec:error} with $p=2$.  Thus we choose to limit our implementation to the uncentered compact stencils with the expectation (which is confirmed by numerical experiments) that the resulting scheme for approximating \MA will nevertheless display second-order accuracy in the spatial resolution parameter.  The use of narrow stencils on the triangular mesh leads to stencils that are both aligned and centered so that $\tau_{FD}(r) = h^2$ automatically; see Figure~\ref{fig:TriStencil}.}

{Finally, we choose the regularization parameters $\epsilon_1>0, \epsilon_2 \geq 0$.  From the perspective of consistency error, choosing these to be as small as possible ($\epsilon_1 \ll h$, $\epsilon_2 = 0$) might seem ideal.  However, allowing $\epsilon_2 < \epsilon_1$ is undesirable for many solution methods such as Newton's method.  In the regime where $\epsilon_2 I < D^2u(x) < \epsilon_1 I$, the resulting scheme $F^h$ would be insensitive to perturbations in $u$ and the corresponding Jacobian $\nabla F^h$ would be singular.  Moreover, larger values of $\epsilon_1, \epsilon_2$ are preferable in this regime since increasing these parameters tends to improve the conditioning of the scheme and its Jacobian.  With these factors in mind, we suggest a choice of $\epsilon_1=\epsilon_2 = h^2$, which is smaller than the other terms appearing in the truncation error and will not impact the overall order of scheme.}

From Corollary~\ref{cor:truncation}, the overall formal truncation error of the quadrature scheme~\eqref{eq:quadscheme} is $\bO(h + d\theta^p)$ for every $p>0$, {though in our implementation $d\theta$ is held fixed and the scaling constant depends on $p$}.

We notice that with a fixed stencil, the truncation error $\tau_Q(d\theta)$  of the quadrature scheme does not converge to zero.  Nevertheless, at points $x\in\Omega$ where $u$ is smooth, we expect the overall truncation error of the scheme to be dominated by the remaining terms $\tau_{FD}(r)$ and $\epsilon$ unless the grid is very highly resolved.  Thus in principle the scheme~\eqref{eq:quadscheme} is not consistent.  It is certainly possible to create wider-stencil extensions of this as $h\to0$, though at the expense of a uniform angular discretization. However, in practice we expect that these wider stencils will not need to be engaged until the grid spacing $h$ is very small.
Thus we do expect to see {this scheme outperform lower-order schemes ($\bO(h^p)$, $p<2$)} for most practical refinements of the grid when solutions are smooth enough.  Indeed, this is what we observe for all but the most singular and/or degenerate of our computational examples; see~\autoref{sec:results}.   

\subsection{Implementation on Cartesian grids}\label{sec:cart}
The second implementation we propose is based upon a uniform Cartesian grid.  In order to achieve true consistency and convergence, we will allow for stencils that grow wider as the grid is refined.  To maintain grid-alignment, we are then forced to utilize a non-uniform angular discretization. This prevents the use of a spectrally accurate trapezoid rule.  However, by exploiting higher-order quadrature schemes, we can still produce monotone schemes with improved consistency error on more compact stencils.

Our particular implementation will perform quadrature using Simpson's rule, as outlined in~\eqref{eq:simp}-\eqref{eq:simpsonW}.  The truncation error of this quadrature rule is $\tau_Q(d\theta) = d\theta^4$.

Given a desired stencil width $r = Kh$ for some $K\in\mathbb{N}$, we select an angular discretization by considering neighboring grid points that are a distance $r$ from the reference point $x$ as measured by the $L^1$ norm.

Specifically, we consider a set of angles $\theta_0 < \ldots < \theta_M$ where $M=2K-1$ is odd. Letting $h$ be the standard grid point spacing in the Cartesian grid, we let $r_j$ and $\theta_j$ be the polar coordinates of the grid points
\bq\label{eq:l1angles} r_j(\cos\theta_j, \sin\theta_j) = h\left(K-j, K-\abs{K-j}\right), \quad j = 0, \ldots, 2K-1. \eq
See Figure~\ref{fig:L1angles}.

\begin{figure}
    \centering
    \subfigure[]{
    \includegraphics[width=0.35\textwidth]{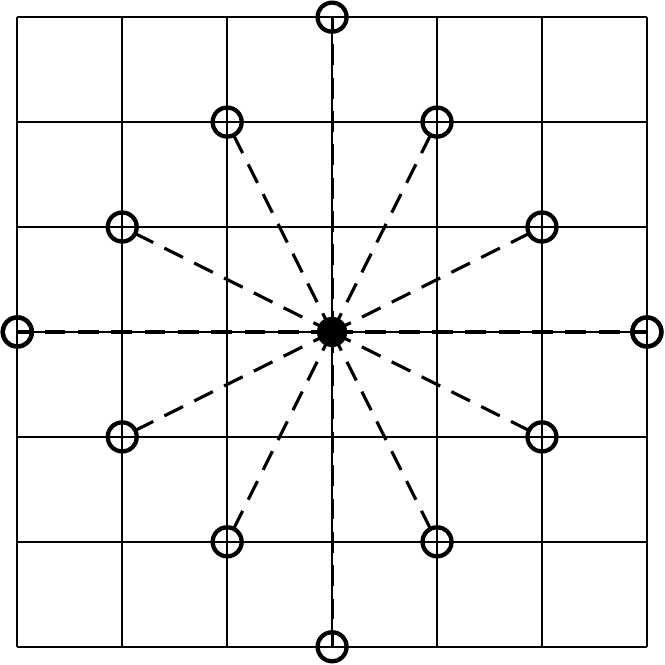}\label{fig:L1Stencil}} \hspace{20pt}
    \subfigure[]{\includegraphics[width=0.35\textwidth]{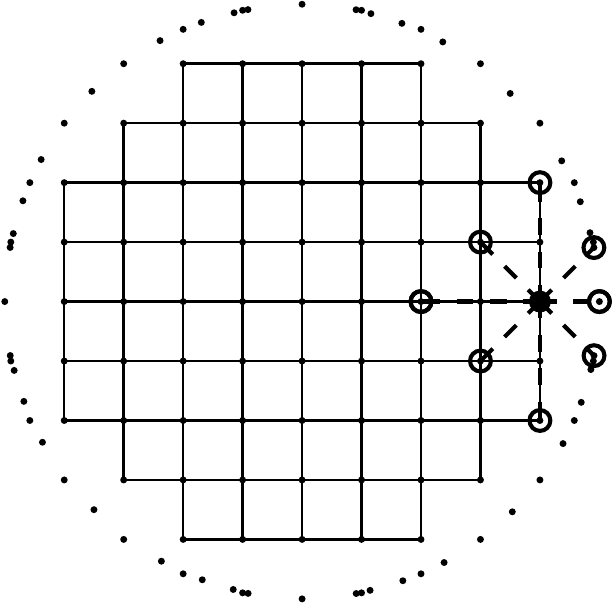}\label{fig:circdom}}
    \caption{\subref{fig:L1Stencil}~A stencil with $L^1$ width $r=3h$ on a Cartesian grid and \subref{fig:circdom}~an example set of grid points used to discretize a disc. }
    \label{fig:L1angles}
\end{figure}

An important consequence of this choice of angles is that it has a uniformly bounded quasi-uniformity constant, as required for consistency (Theorem~\ref{thm:consistency}).  Moreover, as $d\theta\to0$ the ratios
\[ \frac{d\theta_{j+1}}{d\theta_j} \to 1. \]
  This ensures that the quadrature weights~\eqref{eq:simpsonW} are strictly positive, as required for monotonicity (Theorem~\ref{thm:monotonicity}).

\begin{lemma}[Quasi-uniformity]
The angular discretization defined in~\eqref{eq:l1angles} has a uniformly bounded quasi-uniformity constant.
\end{lemma}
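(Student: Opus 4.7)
The strategy is to reduce the estimation of the angular spacings $d\theta_j$ to bounding the derivative of a single smooth interpolant on the first-quadrant angles, and then to propagate the estimate to the remaining angles using the rotational symmetry of the stencil.

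First I would note that for $j = 0, \ldots, K-1$ the point indexed by $j$ in~\eqref{eq:l1angles} lies in the first quadrant, and $\theta_j = f(j)$ with
\[
 f(t) = \arctan\!\bracs{\frac{t}{K-t}}, \quad t \in [0, K).
\]
A direct computation gives $f'(t) = K/(t^2 + (K-t)^2)$. Since the denominator is a convex quadratic attaining minimum $K^2/2$ at $t = K/2$ and maximum $K^2$ at the endpoints, $f'(t) \in [1/K,\, 2/K]$ uniformly on $[0, K]$, and the mean value theorem yields
\[
 \frac{1}{K} \leq d\theta_j \leq \frac{2}{K}, \quad j = 0, \ldots, K-2.
\]

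Next I would exploit the observation that, for $k = 1, \ldots, K-1$, the point indexed by $K+k$ in~\eqref{eq:l1angles} is the $90^\circ$ counter-clockwise rotation of the point indexed by $k$; hence $\theta_{K+k} = \theta_k + \pi/2$, and consequently $d\theta_{K+k} = d\theta_k$ for $k = 1, \ldots, K-2$. This transfers the first-quadrant bound to every index in $\{K+1, \ldots, 2K-2\}$.

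It remains to handle the three ``transitional'' spacings $d\theta_{K-1}$, $d\theta_K$, and $d\theta_{2K-1}$ (the last being the wrap-around from (N6)), at which the interpolant $f$ is singular and the rotation symmetry cannot be invoked directly. Using $\lim_{t\to K^-} f(t) = \pi/2$ together with the identity $\arctan(x) + \arctan(1/x) = \pi/2$ for $x > 0$, each of these three spacings evaluates in closed form to $\arctan(1/(K-1))$, which lies in $[1/K, 2/K]$ for every $K \geq 2$. Combining the three cases gives $d\theta_j \in [1/K, 2/K]$ uniformly in $j$, hence $Q \leq 2$; the small cases $K = 1, 2$ are verified by inspection ($Q = 1$). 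The main subtlety is the behavior at $\theta = \pi/2$, where a single smooth chart over $[0, \pi)$ is unavailable; this is sidestepped by computing the three interface spacings in closed form via the arctangent identity.
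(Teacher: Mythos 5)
Your proof is correct, and it establishes the same underlying estimate as the paper's: both arguments ultimately rest on the fact that $j^2+(K-j)^2$ ranges between $K^2/2$ and $K^2$, so the local spacings are pinched between $1/K$ and $2/K$. The packaging differs, though. The paper computes $\sin d\theta_j = \sin\theta_{j+1}\cos\theta_j - \cos\theta_{j+1}\sin\theta_j = Kh^2/(r_jr_{j+1})$ exactly from the grid coordinates, bounds $r_j$ by $hK/\sqrt{2} \leq r_j \leq hK$, and concludes that the bound on $Q$ tends to $2$ as $K\to\infty$ (asymptotic because of the passage from $\sin d\theta_j$ back to $d\theta_j$); the second quadrant is dispatched with a one-line appeal to symmetry. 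You instead differentiate the interpolant $f(t)=\arctan(t/(K-t))$, whose derivative $K/(t^2+(K-t)^2)$ is exactly the continuous analogue of the paper's $Kh^2/(r_jr_{j+1})$, and apply the mean value theorem. This buys you the clean non-asymptotic bound $Q\leq 2$ for every $K\geq 2$ with no small-angle conversion, at the cost of having to treat the three spacings straddling $\theta=\pi/2$ and the wrap-around $d\theta_M$ separately in closed form — which you do correctly, and which is actually more careful than the paper's ``identical by symmetry.'' Both proofs are complete; yours is slightly more explicit at the seams, the paper's is slightly more compact.
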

\begin{proof}
We bound the ratio $d\theta/d\theta_j$ for $j = 0, \ldots, K$.  The remaining cases are identical by symmetry.

Notice that the local stencil width is given by
\[ r_j^2 = h^2\left((K-j)^2+j^2\right).\]
This is bounded by
\[ \frac{hK}{\sqrt{2}}  \leq r_j \leq hK.\]

We can also compute the local angular resolution via
\begin{align*}
\sin d\theta_j &= \sin(\theta_{j+1}-\theta_j)\\
  &= \sin\theta_{j+1}\cos\theta_j - \cos\theta_{j+1}\sin\theta_j\\
  &= \frac{h^2}{r_jr_{j+1}}\left((j+1)(K-j)-(K-j-1)j\right)\\
  &= \frac{Kh^2}{r_jr_{j+1}}.
\end{align*}

Bounds on $r_j$ imply that for every $j$,
\[ \frac{\sin d\theta}{\sin d\theta_j} \leq \frac{2/K}{1/K} = 2.  \]

Thus the upper bound on the quasi-uniformity constant $Q$ converges to 2 as $K\to\infty$.
\end{proof}

\begin{lemma}[Ratios of angles]
The angular discretization defined in~\eqref{eq:l1angles} satisfies
\[ \frac{d\theta_{j}}{d\theta_{j+1}} \to 1 \]
as $K\to\infty$.
\end{lemma}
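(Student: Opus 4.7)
The plan is to exploit the explicit identity $\sin d\theta_j = Kh^2/(r_j r_{j+1})$ established in the proof of the previous lemma. This immediately gives the telescoping relation
\[ \frac{\sin d\theta_j}{\sin d\theta_{j+1}} = \frac{r_{j+2}}{r_j}, \]
so the problem reduces to controlling consecutive radii. Since the quasi-uniformity constant is bounded, $d\theta_j \leq Q\, d\theta \leq \pi Q/(M+1) \to 0$ as $K\to\infty$, so the expansion $\sin x = x(1+O(x^2))$ allows us to replace the ratio of sines by the ratio of angles with only a multiplicative $1+o(1)$ error. Thus it suffices to show $r_{j+2}/r_j \to 1$ uniformly in $j$.

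Next, I would restrict attention to $j \in \{0,\ldots,K-1\}$. The cases $j \in \{K, \ldots, 2K-2\}$ follow by the reflection symmetry of~\eqref{eq:l1angles} across $\theta = \pi/2$, and the wrap-around index $j=M$ is handled by noting (as was already observed for the symmetry argument in the previous lemma) that $d\theta_M = d\theta_0$ by periodicity. On the range $j = 0,\ldots,K-1$ we have $r_j^2 = h^2\bigl((K-j)^2 + j^2\bigr)$, and a direct computation gives
\[ r_{j+2}^2 - r_j^2 = h^2\bigl[(K-j-2)^2 - (K-j)^2 + (j+2)^2 - j^2\bigr] = 4h^2(2j+2 - K). \]
Combined with the lower bound $r_j^2 \geq h^2K^2/2$ (obtained by minimizing $(K-j)^2+j^2$ at $j=K/2$), this produces the uniform estimate
\[ \left|\frac{r_{j+2}^2}{r_j^2} - 1\right| \leq \frac{4h^2(K+2)}{h^2 K^2/2} = O(1/K), \]
from which $r_{j+2}/r_j \to 1$ uniformly in $j$.

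Combining the two steps then yields $d\theta_j/d\theta_{j+1} = (r_{j+2}/r_j)(1+o(1)) \to 1$, as desired. There is no real obstacle here; the whole argument is driven by the closed-form identity for $\sin d\theta_j$ from the previous lemma. The only mild subtlety is bookkeeping: verifying that the wrap-around case $j = M$ and the reflected range $j \geq K$ are genuinely covered by the symmetry so that a single uniform bound on $|r_{j+2}/r_j - 1|$ suffices for all $j$.
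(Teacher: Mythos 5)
Your proposal is correct and follows essentially the same route as the paper: the identity $\sin d\theta_j = Kh^2/(r_jr_{j+1})$ from the quasi-uniformity lemma, the telescoping ratio $\sin d\theta_j/\sin d\theta_{j+1} = r_{j+2}/r_j$, the symmetry reduction to $j \leq K$, and the uniform $\bO(1/K)$ bound on $r_{j+2}^2/r_j^2 - 1$ via the lower bound $(K-j)^2+j^2 \geq K^2/2$. If anything you are slightly more careful than the paper, which silently identifies the ratio of radii with the ratio of their squares and passes from ratios of sines to ratios of angles without comment; your explicit $\sin x = x(1+\bO(x^2))$ step and the square-then-take-roots bookkeeping close both of those small gaps.
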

\begin{proof}
As in the proof of the previous lemma, we can use symmetry to limit ourselves to considering $j = 0, \ldots, K$ and compute 
\[ \frac{\sin d\theta_{j}}{\sin d\theta_{j+1}} = \frac{r_{j+2}}{r_{j}} = \frac{(K-j-2)^2+(j+2)^2}{(K-j)^2+j^2} = 1 + \frac{8-4K+8j}{(K-j)^2+j^2}. \]

We notice that
\[  \frac{\abs{8-4K+8j}}{(K-j)^2+j^2} \leq \frac{20K}{K^2/2},\]
which converges to zero as $N\to\infty$.

Therefore
\[ \frac{\sin d\theta_{j}}{\sin d\theta_{j+1}} \to 1\]
as $K\to\infty$.
\end{proof}

From the proofs of the previous lemmas, we notice that $d\theta = \bO(1/K)$.  Since we initially chose the search radius $r = Kh$, we find the following relationship between the grid parameters described in Definition~\ref{def:notation}:
\[ d\theta = \bO\left(\frac{h}{r}\right). \]
The uniform Cartesian grid allows us to use centered differences to discretize the second derivatives~\eqref{eq:sdd}, so that $\tau_{FD}(r) = r^2$.  We recall also that Simpson's rule satisfies $\tau_Q(d\theta) = d\theta^4$.

Combining these terms, we find that the formal truncation error of the quadrature scheme  (Corollary~\ref{cor:truncation}) is given by
\[ \bO(\tau_Q(d\theta) + \tau_{FD}(r) + \epsilon) = \bO(h^4/r^4 + r^2 + \epsilon). \]

An optimal choice is obtained by the stencil width
\[ r = \bO(h^{2/3}), \]
which leads to an angular resolution of $d\theta = \bO(h^{1/3})$.  Choosing $\epsilon \leq r^2 = \bO(h^{4/3})$, we find that the formal consistency error of the scheme is given by
\[ \bO(h^{4/3}). \]
Moreover, these choices satisfy all the requirements of consistency (Theorem~\ref{thm:consistency}).

We should remark that in a small band of radius $r$ near the boundary of $\Omega$, it may not be possible to use centered finite differences.  Instead, we must fall back on uncentered differences in~\eqref{eq:sdd} ($h^+\neq h^-$) so that $\tau_{FD}(r) = r = \bO(h^{2/3})$.  This reduces the overall truncation error of the scheme to $\bO(h^{2/3})$.  However, we emphasize that this occurs only in a narrow band, which vanishes as $h\to0$.  In our computational experiments (\autoref{sec:results}), we found that the reduced accuracy at a small number of points had \emph{no} impact on the global accuracy of the method.

We notice that with a careful selection of grid parameters, the quadrature based scheme can produce substantial improvements over monotone schemes such as the work of~\cite{FroeseMeshfreeEigs}, which requires a much larger stencil width $r =\bO(\sqrt{h})$ and produces a significantly worse truncation error $\bO(\sqrt{h})$.  This is possible because higher-order quadrature rules allow for substantial improvements in the component of the error coming from the angular resolution $d\theta$, which can be made arbitrarily small with the use of higher-order quadrature schemes.

The formal convergence of our quadrature scheme is superlinear in $h$, which is of great value when the goal is to approximate solution gradients (which is common in problems related to optimal transport).  Moreover, higher-order quadrature rules could be substituted in place of Simpson's rule to provide even greater improvements in both stencil width and truncation error.  In general, a quadrature rule satisfying $\tau_Q(d\theta) = h^p$ can be combined with a stencil width $r = h^{1-2/p}$ to produce a scheme with a formal truncation error of $\bO(h^{2-4/p})$.



\section{Computational Results}\label{sec:results}
In this section, we present numerical results for the \MA equation {(${\det}^+(D^2u(x)) = f(x)$) with Dirichlet boundary conditions ($u(x)=g(x)$).  To accomplish this, we solve a system of the form
\bq\label{eq:system}
F^h(x,u^h(x),u^h(x)-u^h(\cdot)) = \begin{cases}  G^h(x,u^h(x),u^h(x)-u^h(\cdot)) + f(x) & x \in \G\cap\Omega\\ u^h(x)-g(x), & x \in \G\cap\partial\Omega\end{cases}
\eq
where $G^h$ is a consistent, monotone approximation of the convexified \MA operator.
}

 We will compare the results of the following {four} schemes:
\begin{itemize}
    \item The quadrature scheme on a hexagonal grid described in \autoref{sec:hex}.
		\item {The quadrature scheme on a triangular grid described in \autoref{sec:hex}.}
    \item The quadrature scheme on a Cartesian grid described in \autoref{sec:cart}.
    \item The method of~\cite{HS_Quadtree}, which relies on a variational formulation of the \MA operator,
    \[ {\det}^+(D^2u) = \min\limits_{\nu_1 \cdot \nu_2 = 0} \prod\limits_{j=1}^2\max\{u_{\nu_j\nu_j},0\}, \]
    discretized using {centered differences on the Cartesian grid described in \autoref{sec:cart}}.
\end{itemize}

\subsection{Numerical Implementation} 
To discretize the domain using a Cartesian grid (for either the quadrature or variational schemes), we begin with an underlying $N\times N$ grid that contains the domain.  To discretize the domain using a hexagonal tiling, we begin with a tiling covering the domain that contains $N$ points along the vertical dimension, with (approximately) $\frac{1}{2}N$ points along the horizontal dimension at each fixed nodal value of $y$.  {To discretize the domain using a triangular tiling, we begin with a tiling covering the domain that contains $N$ points along the vertical dimension, with (approximately) $N$ points along the horizontal dimension at each fixed nodal value of $y$.}  The grids are then restricted to the interior and augmented with boundary points using Algorithm~\ref{alg:meshing}.  In {every} case, we have $N \approx \frac{k}{h}$, where the constant $k$ depends only on the size of the domain.

Each of the three discretizations we consider results in a nonlinear algebraic system of equations. We solve these using a damped Newton's method 
\begin{align*}
\nabla F^h[u_n] y_n &= -(f+F^h[u_n])\\
    u_{n+1} &= u_{n} + \alpha_n y_n
\end{align*}
where the value of $\alpha_n$ is chosen at each step to ensure that the residual $r_n = \| F^h[u_n]+f\|_\infty$ is always decreasing. We run Newton's method until the residual falls below the threshold $r_n < h^2$ {since, for more degenerate/singular example, quadratic convergence is not always observed until the residual is very small}.

To obtain an initial guess $u_0$ for the Newton solver, we first solve the following Poisson equation, which is obtained through linearization of the \MA equation~\cite{BFO_MA,FO_MATheory}:
\begin{equation}\label{eq:pinit}\begin{cases}
    {\Delta} u(x) = \sqrt{2f(x)}, & x \in \Omega
    \\
    u(x) = g(x), & x\in \pOm \nonumber.
\end{cases}\end{equation}

The solution process can be accelerated slightly by first solving the linearized problem~\eqref{eq:pinit} on a coarse $N\times N$ grid, solving the nonlinear problem via Newton's method on the same coarse grid, then interpolating onto the desired refined grid to initialize the final Newton solver.
\subsection{Representative Examples}
We test our methods using four representative benchmark examples.  For simplicity of comparison, each example is posed on a square domain.  However, we should note that this is \emph{not} a simplifying assumption for the quadrature method, which performs equally well on general convex domains.

\begin{figure}[ht]
    \centering
    \subfigure[]{
        \includegraphics[width = 0.35\textwidth]{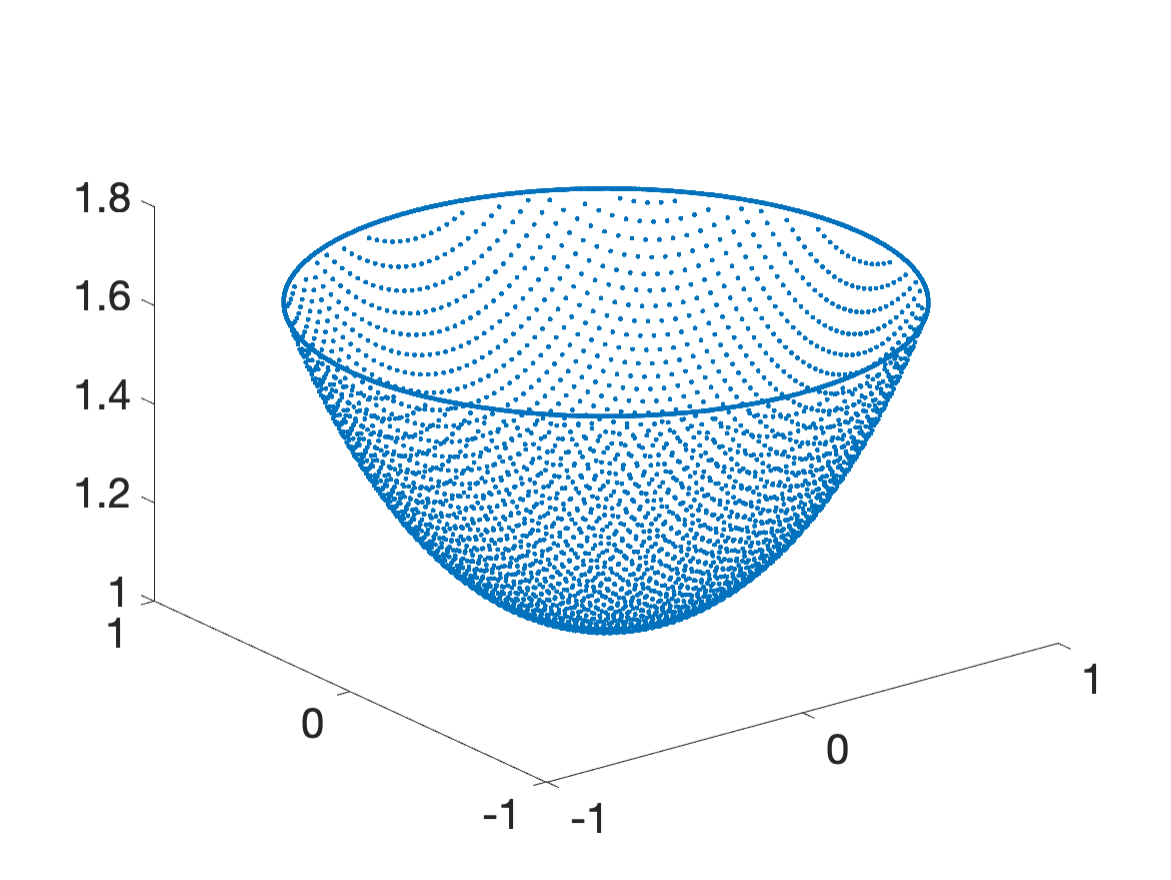}\label{fig:nsolsmth}}
    \subfigure[]{
        \includegraphics[width = 0.35\textwidth]{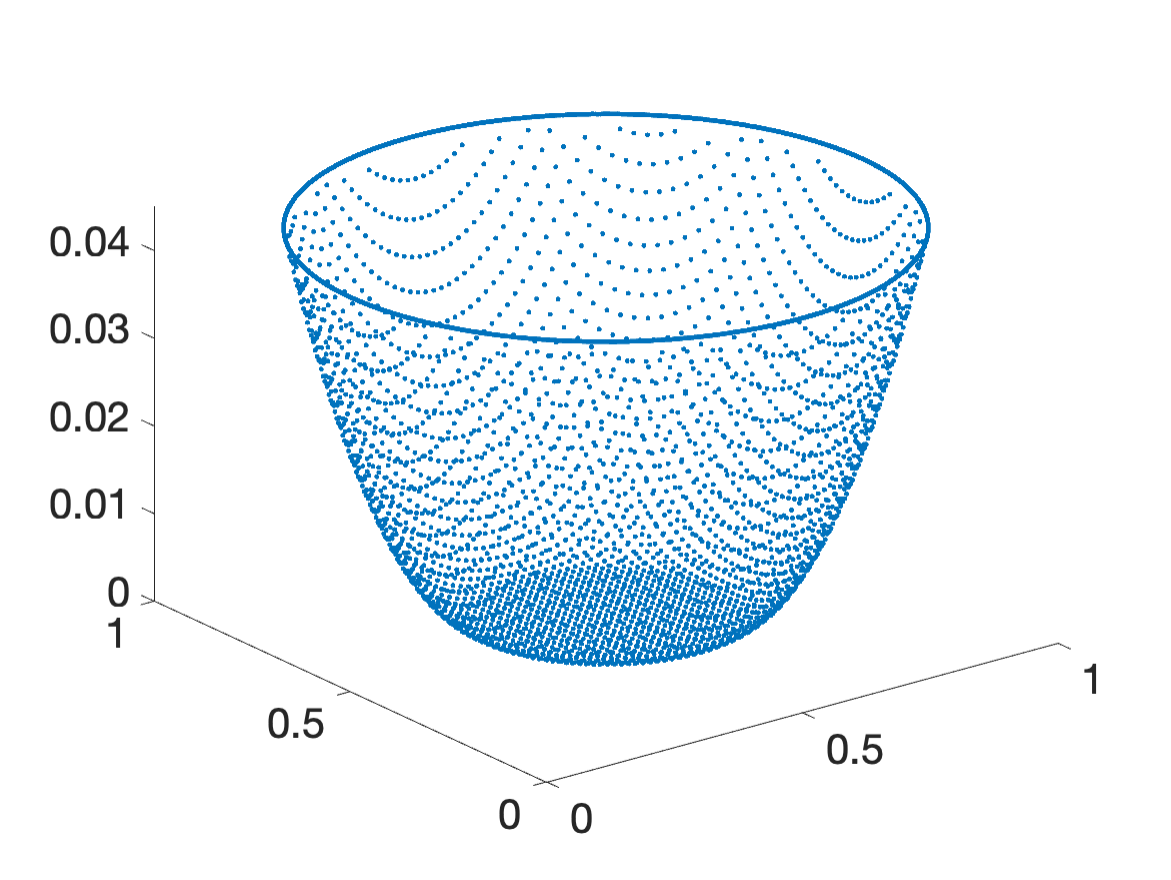}\label{fig:nsoldeg}}
    \subfigure[]{
    \includegraphics[width = 0.35\textwidth]{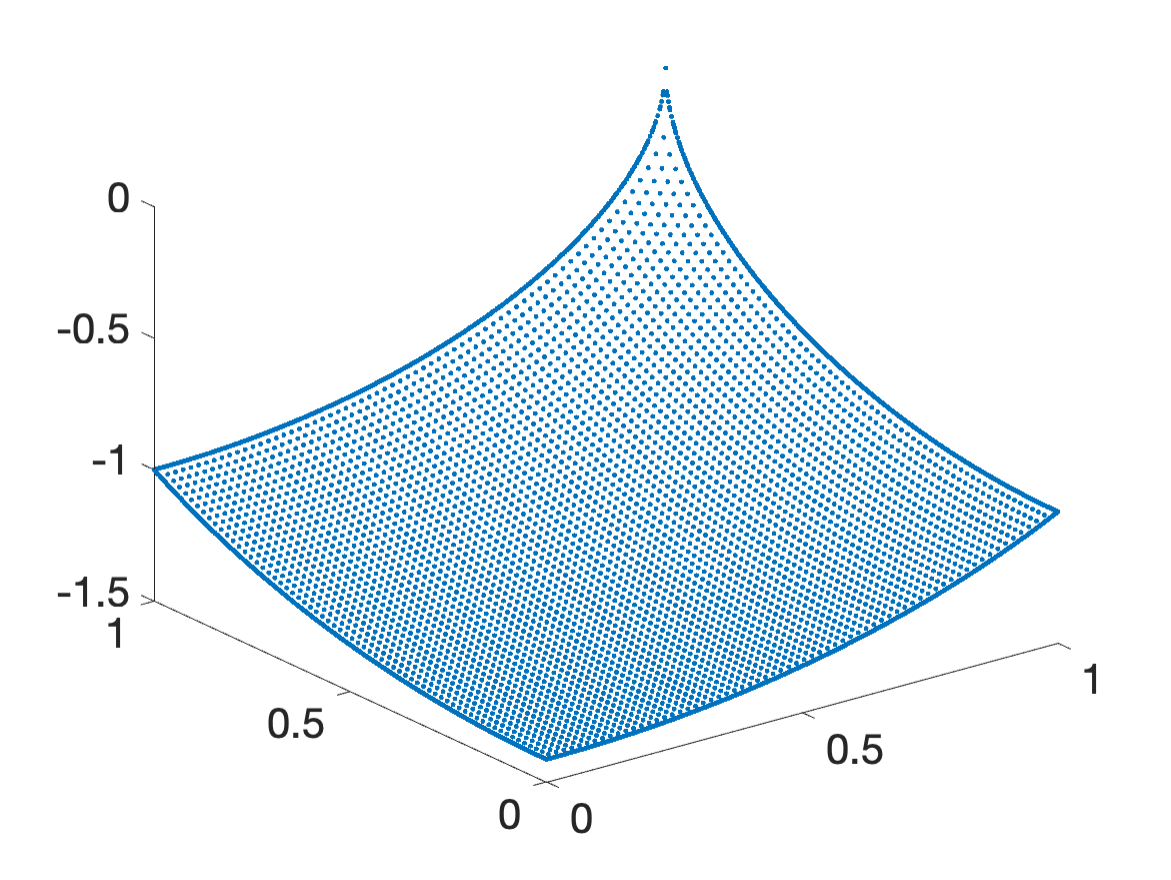}\label{fig:nsolBU}}
    \subfigure[]{
    \includegraphics[width = 0.35\textwidth]{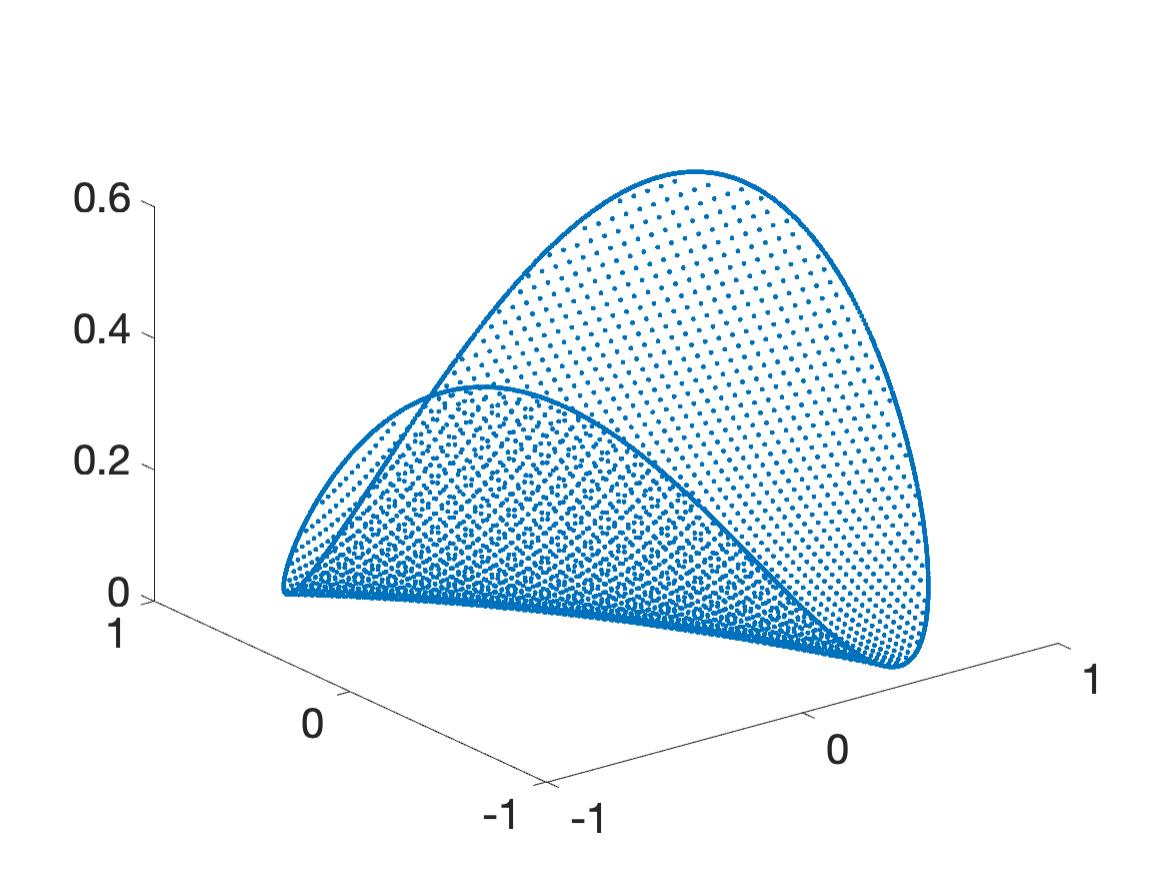}\label{fig:nsolgam}}
    \caption{Numerical solutions for \subref{fig:nsolsmth} $C^2$ example, \subref{fig:nsoldeg} $C^1$ example, \subref{fig:nsolBU} example with gradient blow-up, and \subref{fig:nsolgam} semi-degenerate example.}
    \label{fig:nsol}
\end{figure}

The first example is defined on the domain $\Omega = (-1,1)^2$ and has a smooth, radially symmetric solution $u \in C^\infty(\Omega)$.
\begin{equation}\label{eq:ex1}
    u(\textbf{x}) = \text{exp}\bigg{(}\frac{\norm{\textbf{x}}^2}{2}\bigg{)},\quad f(\textbf{x}) = \bigg{(}1+\norm{\textbf{x}}^2\bigg{)}\text{exp}\big{(}\norm{\textbf{x}}^2\big{)}.
\end{equation}

The second example is defined on the domain $\Omega = (0,1)^2$ and includes a `fully degenerate' region where both eigenvalues of $D^2u$ are 0. The solution $u \in C^1(\Omega)$ is only continuously differentiable. We introduce the constant $\textbf{x}_0 = (0.5,0.5)$ and let
\begin{equation}\label{eq:ex2}
    u(\textbf{x}) = \frac{1}{2} \big{(}(\norm{\textbf{x}-\textbf{x}_0}-0.2)^+\big{)}^2, \quad f(\textbf{x}) = \bigg{(}1-\frac{0.2}{\norm{\textbf{x}-\textbf{x}_0}}\bigg{)}^+.
\end{equation}

The third example has domain $\Omega = (0,1)^2$, and the solution is twice differentiable in the interior of the domain. However, the solution gradient becomes unbounded near the boundary point $(1,1)$.
\begin{equation}\label{eq:ex3}
    u(\textbf{x}) = - \sqrt{2 - \norm{\textbf{x}}^2},\quad f(\textbf{x}) = 2\big{(}2 - \norm{\textbf{x}}^2\big{)}^{-2}.
\end{equation}

The final example is defined on the domain $\Omega = (-1,1)^2$ and the solution $u \in C^{2}(\Omega)$ is in fact a polynomial.  We introduce the vector $\vec{\gamma} = {(\frac{1}{\sqrt{2}},1-\frac{1}{\sqrt{2}})}$ and let
\begin{equation}\label{eq:ex4}
    u(\textbf{x}) = (\vec{\gamma}\cdot \textbf{x})^2, \quad f(\textbf{x}) = 0.
\end{equation}
The solution is `semi-degenerate' on the entire domain, with $D^2u(x)$ having one positive and one vanishing eigenvalue at each point in the domain.  This fully semi-degenerate example, while somewhat artificial, should be viewed as an ``edge case'' for the quadrature scheme since the truncation error degrades in this setting (Lemma~\ref{lem:truncationSemiDeg}).

See Figure~\ref{fig:nsol} for graphs of the solutions $u$, which were obtained using the Cartesian quadrature scheme.

\subsection{Numerical Results} 
\begin{figure}[ht]
    \centering
    \subfigure[]{
        \includegraphics[width = 0.45\textwidth]{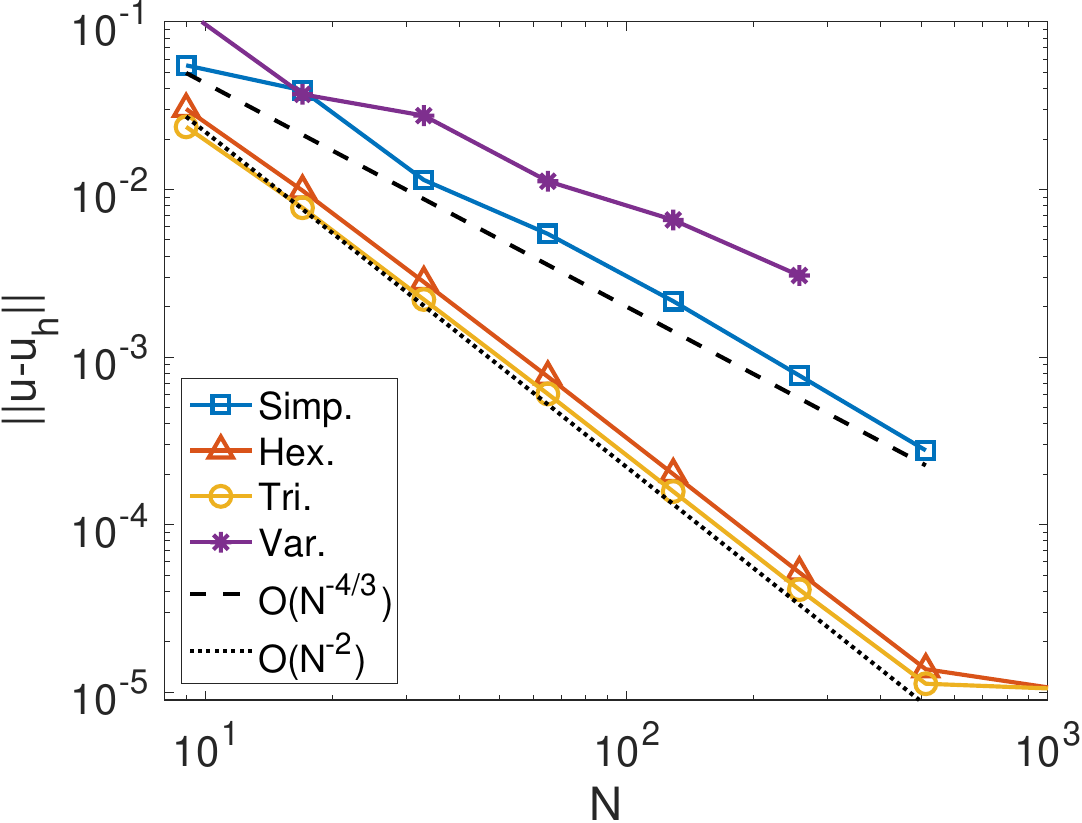}\label{fig:convsmth}}
    \subfigure[]{
        \includegraphics[width = 0.45\textwidth]{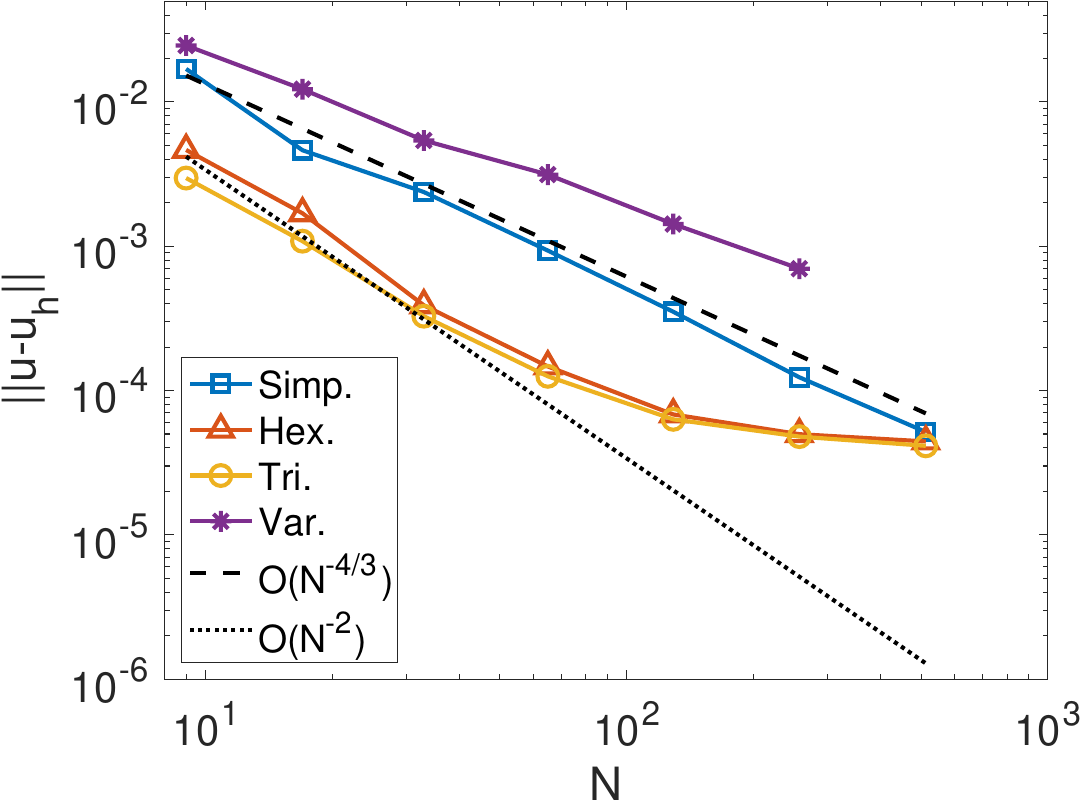}\label{fig:convdeg}}
    \subfigure[]{
    \includegraphics[width = 0.45\textwidth]{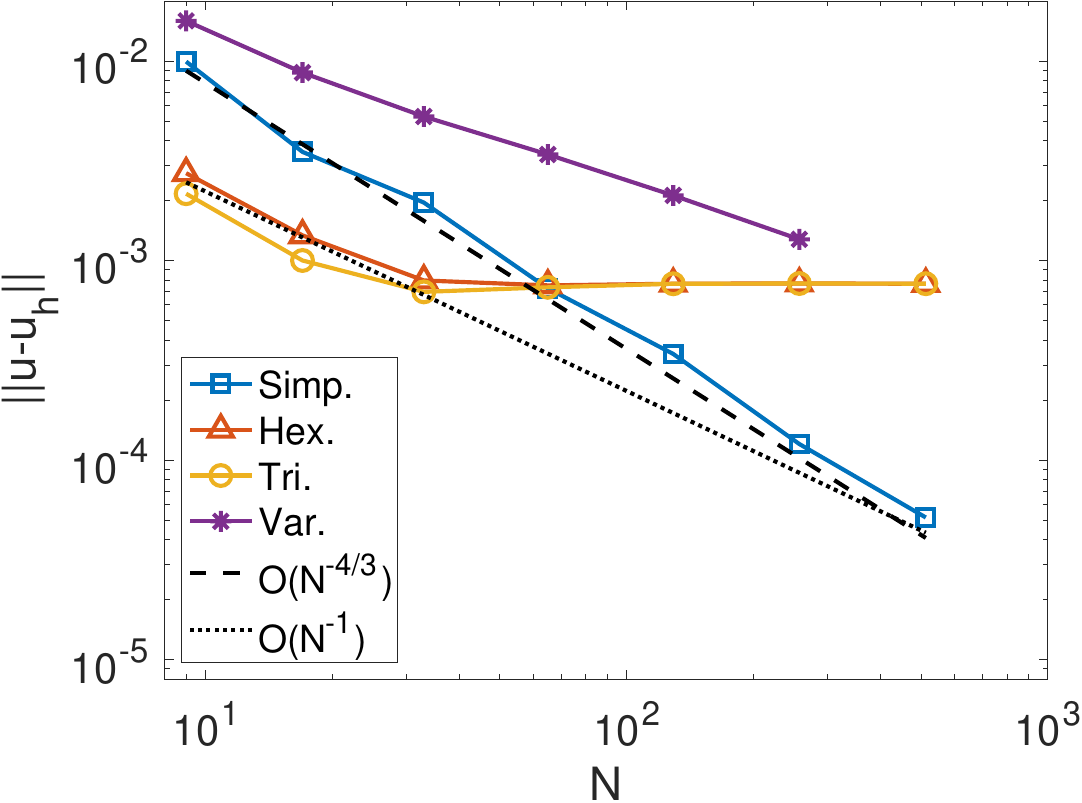}\label{fig:convBU}}
    \subfigure[]{
    \includegraphics[width = 0.45\textwidth]{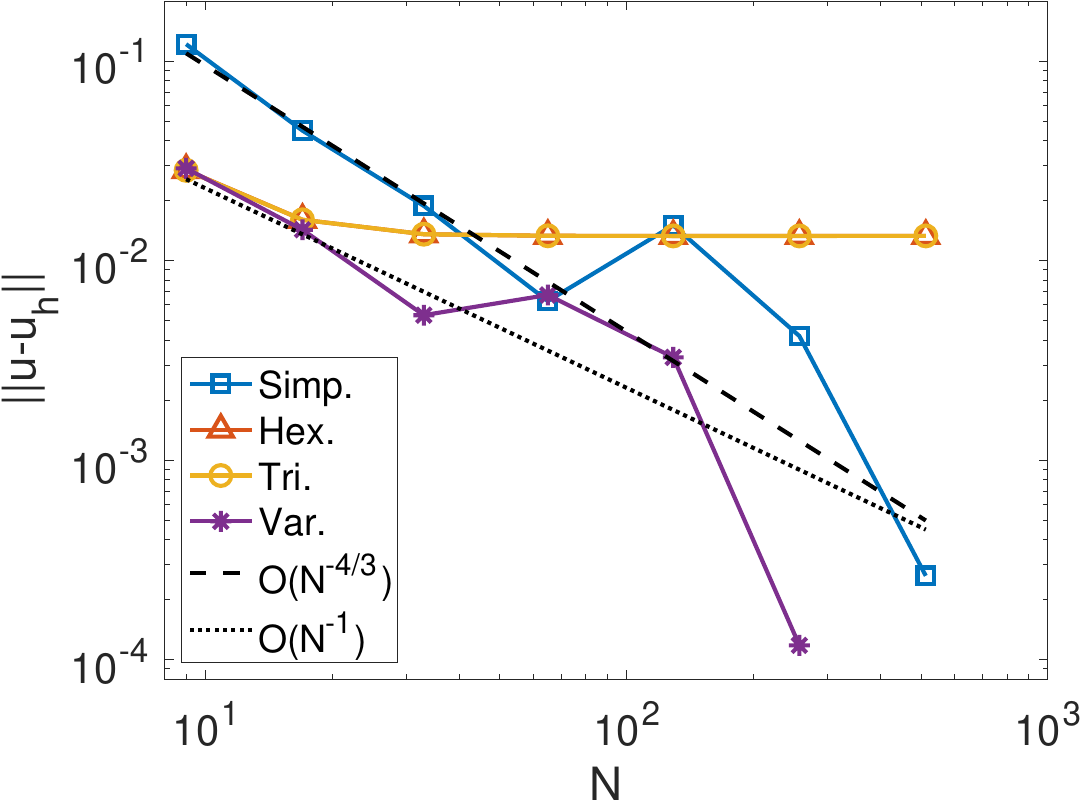}\label{fig:convgam}}
    \caption{Convergence tests for the \subref{fig:convsmth} $C^2$ example, \subref{fig:convdeg} $C^1$ example, \subref{fig:convBU} example with gradient blow-up, and \subref{fig:convgam} semi-degenerate example.}
    \label{fig:conv}
\end{figure}

The maximum error for each test is displayed in Figure~\ref{fig:conv}.  We find that the hexagonal {and triangular implementations} display effectively quadratic convergence for smooth enough tests {and small enough values of $N$}.  As expected, the error eventually levels off for less regular examples and larger values of $N$, though {these implementations} continue to outperform the others over a large range of refinements.  The Cartesian implementation of the quadrature scheme displays the expected superlinear $\bO(N^{-4/3})$ convergence; surprisingly, this continues to be true even for the less regular examples.  {On the semi-degenerate example, we observe non-monotonic convergence of the Cartesian scheme as the grid is refined.  This is likely due to the fact that the parameter $\vec{\gamma}$ is non grid-aligned; a coarser resolution can, by chance, include an angle that aligns closely to $\vec{\gamma}$, leading to improved accuracy.}

Comparison with the variational scheme demonstrates the clear superiority of the quadrature based method that are made possible by reducing the angular component of the error.  Because the variational implementation has {limited accuracy in the angular component (truncation error is $\bO(d\theta^2 + r^2$), while $d\theta$ itself goes to zero very slowly in the wide stencil schemes ($d\theta = \bO(h/r) \gg h$), solution error is at best $\bO(h)$}.

\begin{figure}[ht]
    \centering
    \subfigure[]{
        \includegraphics[width = 0.45\textwidth]{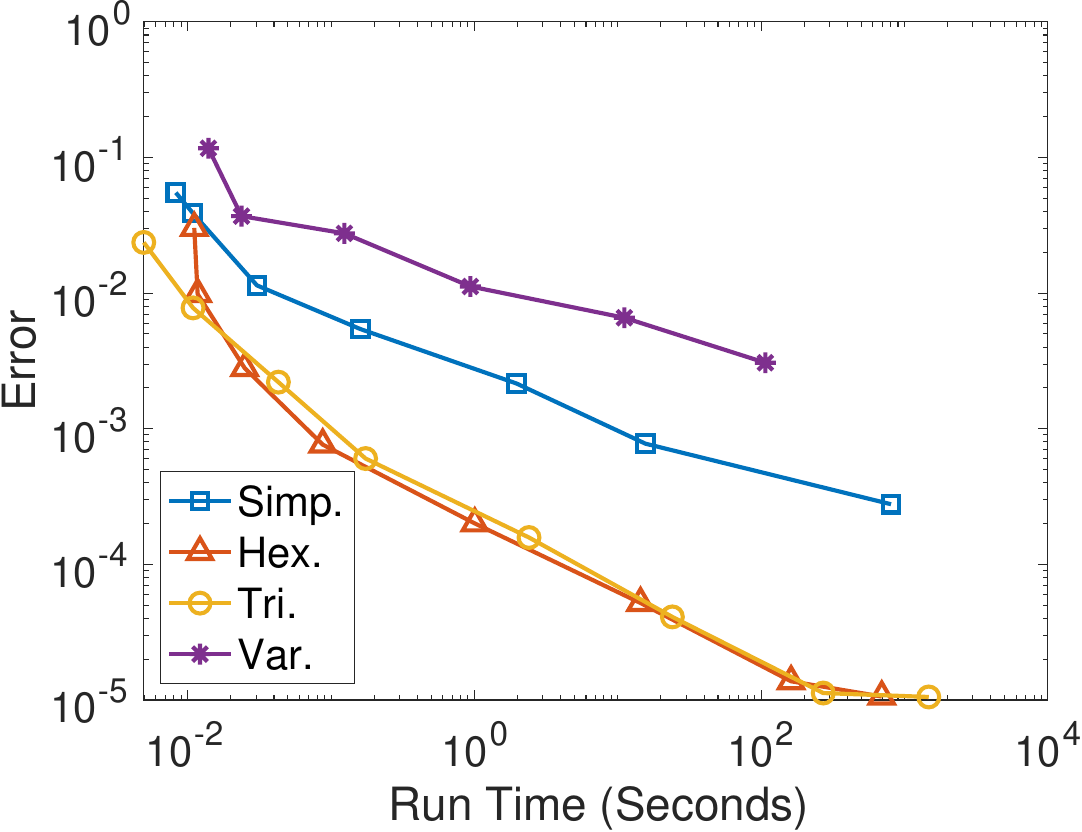}\label{fig:effsmth}}
    \subfigure[]{
        \includegraphics[width = 0.45\textwidth]{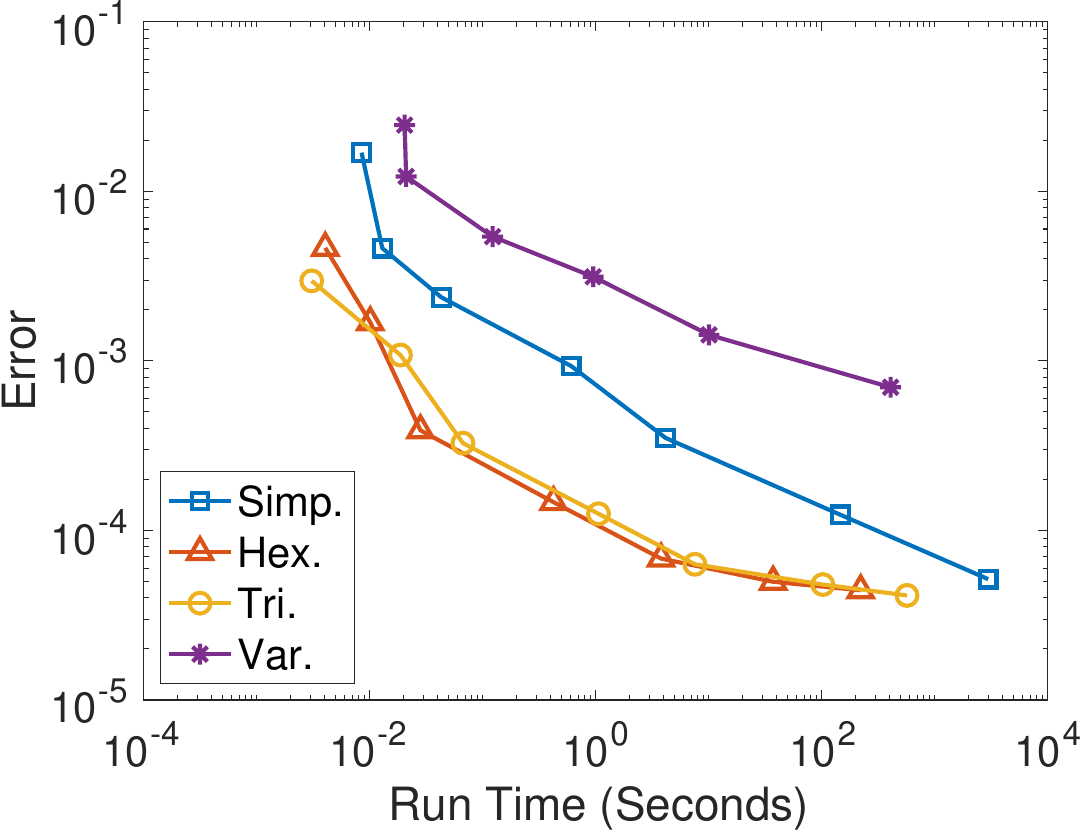}\label{fig:effdeg}}
    \subfigure[]{
    \includegraphics[width = 0.45\textwidth]{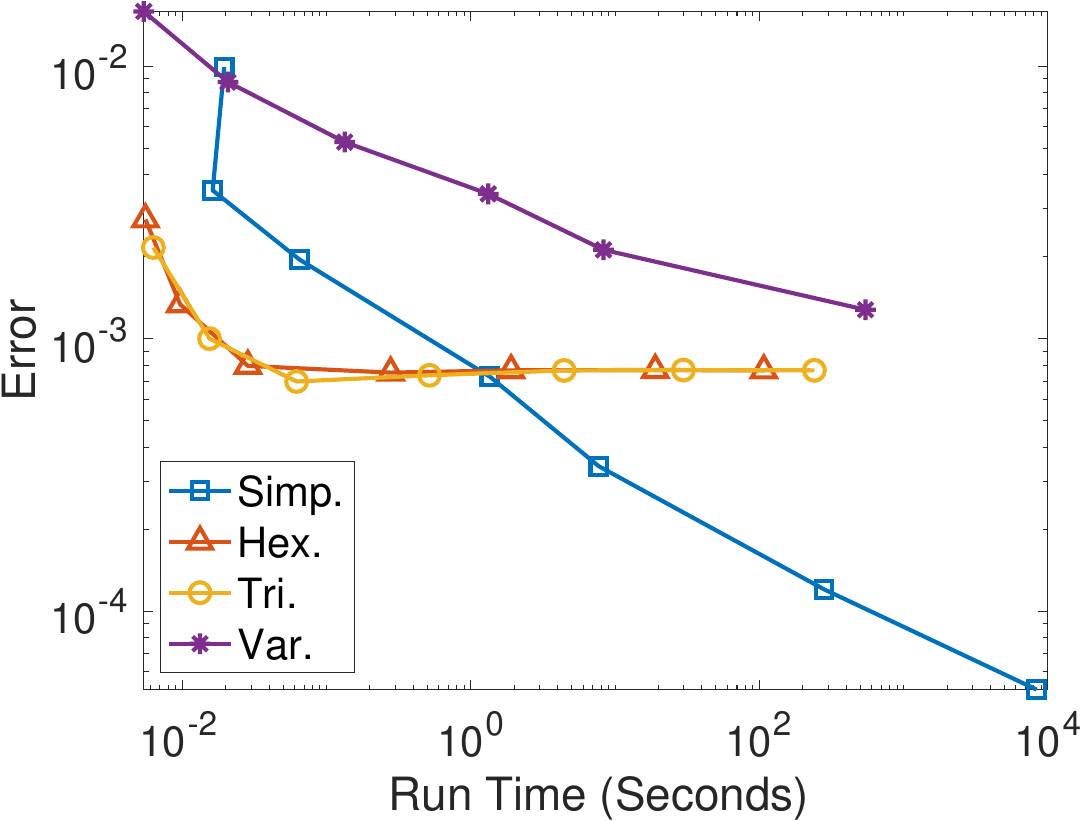}\label{fig:effBU}}
    \subfigure[]{
    \includegraphics[width = 0.45\textwidth]{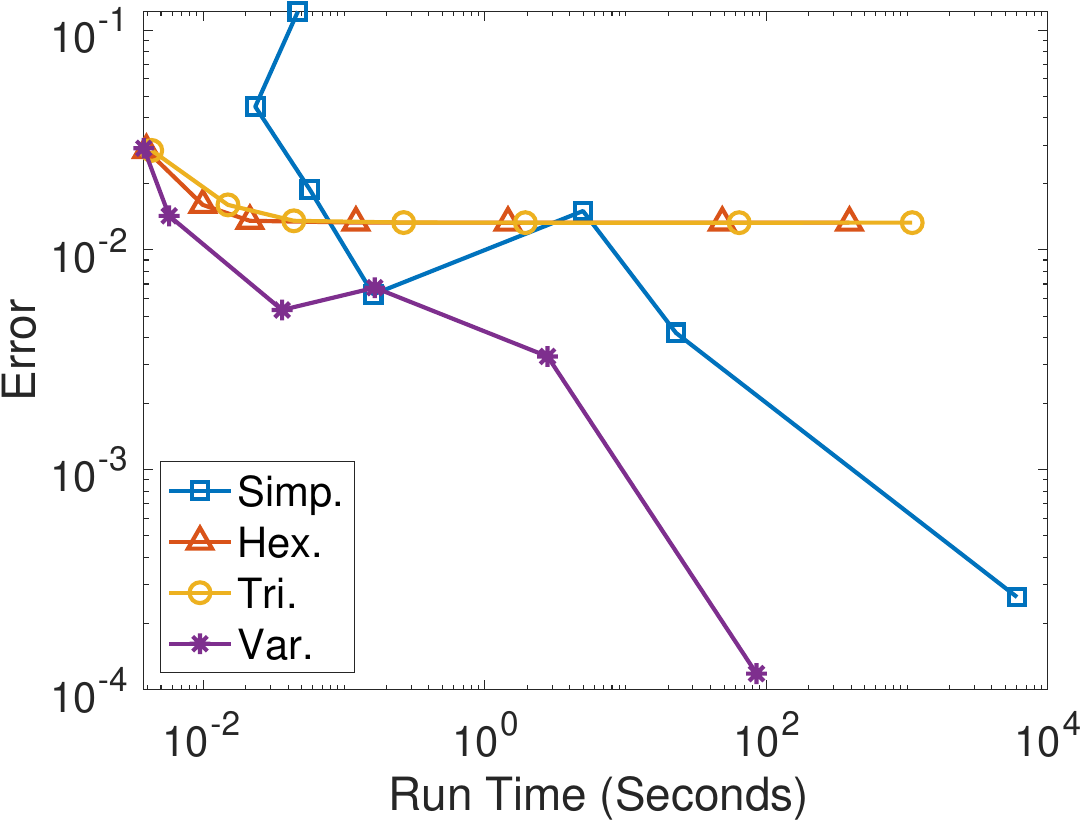}\label{fig:effgam}}
    \caption{Efficiency results for the \subref{fig:effsmth} $C^2$ example, \subref{fig:effdeg} $C^1$ example, \subref{fig:effBU} example with gradient blow-up, and \subref{fig:effgam} semi-degenerate example.}
    \label{fig:eff}
\end{figure}

The improvement achieved by the quadrature schemes becomes even more pronounced when the improvement in computational cost is factored in.  See Figure~\ref{fig:eff} for plots of solution error as a function of computation time.  It is clear that for smooth, and even moderately non-smooth examples, the hexagonal {and triangular} implementations provide the best results despite the fact that {they are} not technically consistent in the limit $N\to\infty$.  On the most singular examples (eg: blow-up in the gradient), the Cartesian implementation takes over as the most efficient.  {All} quadrature schemes dramatically outperform the variational scheme, which requires a much wider stencil ($r = \bO({\sqrt{h}})$) to optimize truncation error.  
The only exception to this trend is the semi-degenerate example.  As noted before, this can be viewed as an ``edge case'' where the variational scheme {will sometimes} perform unusually well {because (1) the centered finite difference approximations are exact on quadratics and (2) chance near-alignment between the eigenvectors of the Hessian and the underlying Cartesian grid can drastically reduce the truncation error}.  Indeed, the performance of the Cartesian quadrature scheme is still good even on this challenging test problem.

\section{Conclusion}\label{sec:conclusion}
In this paper we presented a new integral representation of the \MA operator.  We showed that this can be combined with different quadrature rules to produce a family of monotone finite difference methods.  Importantly, these methods fit directly into existing convergence proofs for the Dirichlet~\cite{FO_MATheory,Hamfeldt_Gauss,Nochetto_MAConverge,ObermanEigenvalues} or optimal transport problems~\cite{BenamouDuval_MABVP2,Bonnet_OTBC,Hamfeldt_OTBC}.

Existing monotone methods for the \MA equation rely on wide finite difference stencils.  The resulting truncation error depends upon several factors: the typical spacing of grid points $h$, the width of the stencil $r$, and the angular resolution of the stencil $d\theta$.  The use of higher-order quadrature schemes allows us to substantially reduce the component of the error coming from the angular resolution.  This, in turn, allows for significant reductions in both the stencil width $r$ and the overall truncation error of the scheme. The end result is a monotone (convergent) method that achieves significant gains in both accuracy and efficiency.

We provided {three} implementations of this method.  The first {two} combined the spectrally accurate trapezoid rule with an underlying hexagonal {or triangular} mesh.  The resulting methods involved a simple nearest-neighbors scheme which is highly efficient and achieves second-order convergence in practice for smooth enough solutions and reasonable grid refinements.  The {third} method utilized a non-uniform Simpson's rule on a Cartesian mesh.  The method is provably convergent, relies on relatively narrow stencils of width $r = \bO(h^{2/3})$, is highly robust with respect to solution regularity, and provides superlinear convergence of order $\bO(h^{4/3})$.  Moreover, this implementation could easily be adapted to accommodate other higher-order quadrature rules, making possible a formal convergence rate of $\bO(h^{2-4/p})$ for any $p>0$.

This approach holds particularly great promise for the three-dimensional \MA equation, for which existing discretizations can be prohibitively expensive~\cite{HL_ThreeDimensions}.  Typical schemes rely on some variational form of the \MA equation, which requires performing optimization over a three-dimensional set at each point in the three-dimensional domain.  A method based upon the integral reformulation would reduce this to the cost of integrating over the sphere, which is two-dimensional.  We also expect this approach to adapt well to generalized \MA equations arising in optimal transport problems in the plane~\cite{Froese_GenMA} or on the sphere~\cite{HT_OTonSphereNumerics}.

\bibliographystyle{plain}
\bibliography{MABib}
\end{document}